\title{Comparing Periodic Point Invariants for Parameterized Families of Maps}
\author{Lucas Williams}
\address{Department of Mathematics, Binghamton University}
\email{lwilli39@binghamton.edu}
\def\l@section{\@tocline{1}{0pt}{1pc}{}{}}
\def\l@subsection{\@tocline{2}{0pt}{1pc}{4.6em}{}}
\def\l@subsubsection{\@tocline{3}{0pt}{1pc}{7.6em}{}}
\renewcommand{\tocsection}[3]{%
	\indentlabel{\@ifnotempty{#2}{\makebox[2.3em][l]{%
				\ignorespaces#1 #2.\hfill}}}#3}
\renewcommand{\tocsubsection}[3]{%
	\indentlabel{\@ifnotempty{#2}{\hspace*{2.3em}\makebox[2.3em][l]{%
				\ignorespaces#1 #2.\hfill}}}#3}
\renewcommand{\tocsubsubsection}[3]{%
	\indentlabel{\@ifnotempty{#2}{\hspace*{4.6em}\makebox[3em][l]{%
				\ignorespaces#1 #2.\hfill}}}#3}
\newcommand{\R}{\mathbb R}
\newcommand{\Z}{\mathbb{Z}}
\newcommand{\Sph}{\mathbb{S}}
\newcommand{\RP}{\mathbb{RP}}
\newcommand{\id}{\textup{id}}
\newcommand{\colim}{\textup{colim}\,}
\newcommand{\sma}{\wedge}
\newcommand{\barsmash}{\,\overline\wedge\,}
\newcommand{\tr}{\textup{tr}\,}
\newcommand{\po}[2]{\ar@{}@<{#2}>[rd]|({#1})*\txt{\Large $\ulcorner$}}
\newcommand{\pb}[2]{\ar@{}@<{#2}>[rd]|({#1})*\txt{\Large $\lrcorner$}}
\DeclareMathOperator{\std}{std}
\newcommand{\newrefformat}[2]{}
\newaliascnt{dia}{equation}
\crefname{dia}{Diagram}{Diagrams}
\newenvironment{diagram*}[1][]{%
    \begin{equation*}%
    \begin{tikzcd}[#1]%
    \refstepcounter{dia}
}{%
    \end{tikzcd}%
    \end{equation*}%
}
\crefname{dia}{Diagram}{Diagrams}
\theoremstyle{plain}   
\newtheorem{thm}{Theorem}[section] 
\let\c@thm\c@thm\makeatother
\newtheorem{cor}[thm]{Corollary}
\let\c@cor\c@thm\makeatother
\newtheorem{lem}[thm]{Lemma}
\let\c@lemma\c@thm\makeatother
\newtheorem{prop}[thm]{Proposition}
\let\c@prop\c@thm\makeatother
\let\c@claim\c@thm\makeatother
\theoremstyle{definition}
\newtheorem{df}[thm]{Definition}
\let\c@defn\c@thm\makeatother
\newtheorem{const}[thm]{Construction}
\let\c@const\c@thm\makeatother
\newtheorem{notn}[thm]{Notation}
\let\c@notn\c@thm\makeatother
\let\c@outline\c@thm\makeatother
\let\c@propty\c@thm\makeatother
\let\c@problem\c@thm\makeatother
\newtheorem{conj}[thm]{Conjecture}
\let\c@conj\c@thm\makeatother
\theoremstyle{remark}
\newtheorem{rmk}[thm]{Remark}
\let\c@rem\c@thm\makeatother
\newtheorem{ex}[thm]{Example}
\let\c@ex\c@thm\makeatother
\let\c@observationn\c@thm\makeatother
\let\c@equation\c@thm
\numberwithin{equation}{section}
\crefname{lemma}{Lemma}{Lemmas}
\crefname{thm}{Theorem}{Theorems}
\crefname{defn}{Definition}{Definitions}
\crefname{notn}{Notation}{Notations}
\crefname{const}{Construction}{Constructions}
\crefname{prop}{Proposition}{Propositions}
\crefname{rem}{Remark}{Remarks}
\crefname{cor}{Corollary}{Corollaries}
\crefname{equation}{Equation}{Equations}
\crefname{ex}{Example}{Examples}
\crefname{propty}{Property}{Properties}
\crefname{problem}{Problem}{Problems}
\begin{document}

\begin{abstract}
	We compare different periodic point invariants for families of maps parameterized over a compact manifold. Malkiewich and Ponto showed that, in the case of a single map, the Fuller trace is equivalent to the collection of Reidmeister traces of iterates. In this paper, we show that, in contrast to the case of a single map, the fiberwise Fuller trace is a strictly more sensitive invariant than the collection of fiberwise Reidemeister traces of iterates. This resolves a conjecture of Malkiewich and Ponto.  
\end{abstract}

\maketitle

\setcounter{tocdepth}{2}
\tableofcontents

\parskip 2ex

\section{Introduction}

Let $M$ be a compact manifold and $f\colon M\to M$ a continuous map. The Lefschetz trace of $f$, denoted $L(f)\in \Z$, can be interpreted as a weighted sum of the fixed points of $f$. The Lefschetz trace has many generalizations which have been studied from a homotopical and $K$-theoretic vantage point in papers such as \cite{dold_puppe, klein_williams, mp2, ponto_asterisque, gn_ktheory}. In the present work, we will focus on generalizations of the Lefschetz trace that count the $n$-periodic points of $f$, or equivalently the fixed points of $f^n$, where $f\colon E\to E$ is a family of maps parameterized over a compact manifold. 

In the case of a single map, $f\colon M\to M$, the Lefschetz traces vanishes if $f$ is homotopic to a map with no fixed points. The converse however, is false. To obtain a complete obstruction to the removal of fixed points from $f$, the Lefschetz trace must be refined to the Reidemeister trace $R(f)$.  The Reidemeister trace takes values in the 0th homology of the twisted free loop space of $f$, which is defined as
\[
\Lambda^fM= \{\gamma\colon I\to M\mid f(\gamma(1)) = \gamma(0)\}.
\]
If $M$ is a compact manifold of dimension not equal to 2, then the Reidemeister trace, $R(f)$, is a complete obstruction to the removal of fixed points from $f$. In other words, $f$ is homotopic to a map without fixed points if and only if $R(f)$ vanishes \cite{jiang80, wecken}.

In \cite{mp2}, Malkiewich and Ponto study several invariants for $n$-periodic points of $f:M\to M$, or rather, the fixed points of $f^n$. The easiest of these to define is the collection of Reidemeister traces of iterates, $\{R(f^k): k\mid n\}$. The other invariant is a $C_n$-equivariant construction called the Fuller trace of $f$. The Fuller trace is defined by first observing that the fixed points of the $C_n$-equivariant map
\begin{align*}
M\times\dots\times M & \xrightarrow{\Psi^n(f)} M\times \dots \times M\\
(x_1,\dots,x_n) & \mapsto (f(x_n),f(x_1),\dots,f(x_{n-1}))
\end{align*}
are precisely the $n$-periodic points of $f\colon M\to M$ \cite{fuller1967index, dold83}. Here the $C_n$-action cycles the coordinates of $M^{\times n}$. Moreover, if $f$ is homotopic to a map with no $n$-periodic points then $\Psi^n(f)$ is $C_n$-equivariantly homotopic to a map with no fixed points. The $n$th Fuller trace of $f$ is the $C_n$-equivariant Reidemeister trace of $\Psi^n(f)$, and is an obstruction to removing the $n$-periodic points from $f$ by a homotopy. The $n$-Fuller trace is a map of genuine $C_n$-spectra
\[
R_{C_n}(\Psi^n(f))\colon \Sph\to \Sigma_+^\infty \Lambda^{\Psi^nf}M^{\times n}.
\]

In \cite[Corollary 1.3]{mp2}, Malkiewich and Ponto answer a conjecture of \cite{klein_williams_2} by proving that the set of Reidemeister traces $\{R(f^k): k\mid n\}$ can be recovered from the Fuller trace $R_{C_n}(\Psi^n(f))$. The vanishing of $R_{C_n}(\Psi^n(f))$ implies the vanishing of $R(f^k)$ for all $k|n$. 

Combining this result with \cite{jezierski_cancelling} yields the following Corollary. If $M$ is a smooth (or PL) compact manifold of dimension at least three, $R(f^k)$ vanishes in the homotopy category of spectra for all $k|n$ if and only if $R_{C_n}(\Psi^n(f))$ vanishes in the homotopy category of $C_n$-spectra if and only if $f$ is homotopic to a map with no $n$-periodic points. 

To summarize, for sufficiently high dimensional manifolds, the Fuller trace and the collection of Reidemeister traces of iterates are equivalent and both are complete obstructions to the removal of $n$-periodic points. 

The present work seeks to study the generalizations of these invariants to the fiberwise setting. In contrast to the setting of a single map, we will show that the fiberwise Fuller trace is not equivalent to the collection of fiberwise Reidemeister traces of iterates. To be precise, we prove the following theorem which resolves \cite[Conjecture 1.9]{mp2}. 

\begin{thm}\label{thm:main}
There is a family of maps $f\colon E\to E$ parameterized over a base space $B$ for which the fiberwise Reidemeister traces $R_B(f)$ and $R_B(f^2)$ both vanish, but for which the fiberwise Fuller trace $R_{B,C_2}(\Psi_B^2 f)$ is non-vanishing.
\end{thm}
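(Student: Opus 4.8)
The plan is to split the fiberwise Fuller trace into the part that the Reidemeister traces of iterates can see and a residual part, and then to build a family that makes the residual part nonzero while the visible part vanishes. Over a point, the genuine $C_2$-spectrum $\Sigma^\infty_+\Lambda^{\Psi^2 f}M^{\times 2}$ is reconstructed from its isotropy separation square: its geometric fixed points record the fixed points of $f$ and carry $R(f)$; its underlying spectrum with residual $C_2$-action records the $2$-periodic points of $f$ and carries $R(f^2)$; and the gluing between the two is a map from the geometric fixed points into the Tate construction of the underlying spectrum. Malkiewich--Ponto's recovery theorem says that for a single map this Tate-valued gluing contains nothing beyond $R(f)$ and $R(f^2)$ --- indeed, once both vanish and $\dim M\ge 3$ one can remove all $2$-periodic points and the whole Fuller trace dies. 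Fiberwise the same square becomes a square of parameterized spectra over $B$, and here a map of parameterized genuine $C_2$-spectra that is null over every point of $B$ need not be globally null: the obstructions to a global null-homotopy live in the cohomology of $B$ with coefficients in the homotopy groups of a fiberwise function spectrum, and the equivariant ones of these carry a parameterized Tate term that the non-equivariant obstructions governing $R_B(f)$ and $R_B(f^2)$ do not. The strategy is therefore to arrange $R_B(f)$ and $R_B(f^2)$ to vanish globally --- which, fiber by fiber, already forces the Fuller trace to be fiberwise null by the $\dim\ge 3$ equivalence recalled above --- and then to exhibit a family whose first obstruction to a global \emph{equivariant} null-homotopy is nonzero.

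For the construction I would take $B$ to be a space with a nontrivial double cover $p\colon\ti B\to B$, for instance $B=S^1$, or $B=\RP^2$ if $2$-torsion detection turns out to be the cleanest route, together with a smooth fiber bundle $E\to B$ whose fiber $M$ is a closed manifold of dimension at least $3$, chosen small enough to keep all fiberwise homotopies under control. I would then build $f\colon E\to E$ over $B$ so that: (i) every $f_b$ is fixed-point free, which makes $R_B(f)$ the zero section outright; (ii) every $f_b$ has a finite set of nondegenerate $2$-periodic orbits whose fixed-point indices (for $f_b^2$) cancel in pairs and which all represent a single twisted conjugacy class, so that $R(f_b^2)=0$ for each $b$; and (iii) the monodromy of $B$ permutes this fixed-point data through the deck transformation of $p$ --- equivalently, the fiberwise fixed-point set of $\Psi^2_B f$ together with its $C_2$-action is a \emph{free} $C_2$-space over $B$ whose underlying cover is the nontrivial $p$. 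Conditions (i) and (iii) make $R_B(f)=0$ immediate, and I expect $R_B(f^2)=0$ to follow because the only potential obstruction --- the class of the fiberwise twisted loop data, which is fiberwise null by (ii) --- is killed globally by the explicit index cancellation along the sheets of $p$; this should be a manageable computation in the parameterized non-equivariant setting.

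The heart of the argument is showing $R_{B,C_2}(\Psi^2_B f)\neq 0$. Here I would feed the free fiberwise $C_2$-fixed-point data of $\Psi^2_B f$ into the fiberwise isotropy separation square: because the action is free, the Fuller trace is purely a ``free part'' class, and --- in contrast to the non-equivariant traces, which only remember the total index of the $2$-periodic points over each point of $B$ --- it remembers how the free $C_2$-orbits are glued over $B$ by the deck transformation of $p$ rather than by a section. Concretely, I expect to detect non-vanishing through the first obstruction to a global equivariant null-homotopy: it is a class built from $\pi_*$ of a parameterized $C_2$-Tate spectrum paired against a cohomology class of $B$, and the relevant coefficient group is nonzero on this class precisely because of the $2$-adic behavior of $\Sph^{tC_2}$ (Lin's theorem), whereas the companion non-equivariant group obstructing $R_B(f^2)$ vanishes on the corresponding class. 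The main obstacle is exactly this step: one must thread the needle so that permuting the $2$-periodic data by the monodromy of $B$ annihilates $R_B(f)$ and $R_B(f^2)$ but not the genuine equivariant gluing, and then carry out enough of a computation in \emph{parameterized} equivariant stable homotopy --- tracking the fiberwise $C_2$-Mackey structure of the fixed-point bundle and the fiberwise homotopy-fixed-point spectral sequence --- to certify that the Fuller trace lands in positive filtration while its geometric-fixed-point and underlying shadows already vanish in filtration zero.
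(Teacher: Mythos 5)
Your high-level plan --- make every $f_b$ fixed-point free, arrange the $2$-periodic orbits so that $R_B(f^2)$ cancels, and then detect the Fuller trace through the genuinely equivariant part of the fiberwise invariant --- matches the overall shape of what the paper does. But both the construction you propose and the detection step differ from the paper's, and the detection step, which you correctly flag as ``the main obstacle,'' is left unaddressed, which is a genuine gap.

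On the construction: the paper uses the \emph{trivial} bundle $E = E_\theta\times S^1$ over $B=S^1$ (a necklace of spheres in each fiber), so the fixed-point set of $\Psi^2_B f$ over $B$ is a disjoint union of copies of $C_2\times S^1$ --- i.e.\ the \emph{trivial} double cover. The source of nontriviality is not the cover at all, but the \emph{fiberwise framing}: the map $f_\theta$ twists two coordinates of one $S^n$ by the angle $\theta$, which turns one family of periodic points into $C_2\times S^1_\eta$, a circle carrying a Hopf-type framing. Your condition (iii) --- making the fiberwise $C_2$-fixed-point set a nontrivial double cover of $B$ --- is a different mechanism, and it sits in tension with conditions (i)--(ii): if the monodromy identifies the two points of a $2$-periodic orbit, those two points have \emph{equal} local indices (they are conjugate under $f_b$), so they cannot cancel each other, and your index-cancellation argument for $R_B(f^2)=0$ becomes substantially harder to arrange. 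You would need additional orbits of opposite index, and you have not explained how to produce them while keeping every $f_b$ fixed-point free.

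On the detection: you propose isotropy separation, a parameterized Tate spectrum, Lin's theorem, and an obstruction-theoretic/HFPSS computation. The paper instead runs everything through an equivariant parameterized Pontryagin--Thom isomorphism, computes the fiberwise Fuller trace as an explicit $C_2$-framed cobordism class in $\omega_0^{C_2}(S^1\to S^1)$, and then detects nonvanishing by the tom Dieck splitting together with the fact that the transfer $\pi_1(\Sigma^\infty_+BC_2)\to\pi_1^{C_2}(\Sigma^\infty_+EC_2)$ is an isomorphism, so that $C_2\times S^1_\eta$ survives to the Hopf class $\eta$. The crisp arithmetic at the heart of the paper is: nonequivariantly, $2\eta=0$ in $\pi_1(\Sph)\cong\Z/2$, so the two $S^1_\eta$'s cancel in $R_B(f^2)$; equivariantly, $C_2\times S^1_\eta$ is a single free orbit, and the norm/transfer of $\eta$ is \emph{not} zero, so there is nothing to cancel against. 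Your Tate-square framing is a reasonable conceptual gloss on this, but invoking Lin's theorem and a parameterized HFPSS is heavier machinery than what the computation actually needs, and without carrying out the obstruction calculation you have not certified that the Fuller trace is in fact nonzero. As written, the proof proposal is an outline whose key step is conjectural, and the concrete construction backing it likely does not satisfy all three of your stated conditions simultaneously.
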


This theorem demonstrates that, in contrast to the case of a single map, the fiberwise Fuller trace is a strictly stronger invariant than the collection of fiberwise Reidemeister traces of iterates. As a side note, this provides a nice example of equivariant stable homotopy theory playing a crucial role in a problem in differential topology. In future work, joint with Cary Malkiewich and Kate Ponto, we plan to follow up the present paper by investigating the following conjecture.

\begin{conj}\cite[Conjecture 1.8]{mp2}
The fiberwise Fuller trace, $R_{B,C_n}(\Psi_B^n f)$, is a complete obstruction to the removal of $n$-periodic points from a family of endomorphisms $f\colon E\to E$ over $B$ when $B$ is a finite dimensional cell complex and $E\to B$ is a manifold bundle with fibers of dimension $3+\dim(B)$. 
\end{conj}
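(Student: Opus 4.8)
The plan is to realize the fiberwise Fuller trace $R_{B,C_n}(\Psi_B^n f)$ as the fiberwise $C_n$-equivariant Reidemeister trace of the cyclic family $\Psi_B^n f$ on the $n$-fold fiberwise self-product $Y:=E^{\times_B n}\to B$ (this is how the invariant is defined), and then to prove a \emph{parametrized equivariant Wecken theorem}: if $Y\to B$ is a $C_n$-manifold bundle all of whose fixed subbundles $Y^{C_d}\to B$ have fiber dimension at least $\dim B+3$, then the fiberwise $C_n$-equivariant Reidemeister trace is a complete obstruction to $C_n$-equivariant fiberwise removal of fixed points. Writing $M$ for the fiber of $E\to B$, so $\dim M=\dim B+3$, the hypothesis on $f$ is exactly calibrated for this: $(E^{\times_B n})^{C_d}\to B$ has fibers $\cong M^{\times n/d}$, of dimension $(n/d)(\dim B+3)\ge\dim B+3$, with equality precisely when $d=n$. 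Granting the theorem, the ``only if'' half of the conjecture is formal: a fiberwise homotopy of $f$ to an $n$-periodic-point-free family induces a $C_n$-equivariant fiberwise homotopy of $\Psi_B^n f$ to a fixed-point-free family, whence the equivariant Reidemeister trace, and so $R_{B,C_n}(\Psi_B^n f)$, vanishes. The dimension hypothesis enters only through the converse.

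For the converse I would first settle the non-equivariant base case: for a family $g\colon E\to E$ over $B$ with fiber dimension at least $\dim B+3$, the fiberwise Reidemeister trace $R_B(g)$ is a complete obstruction to fiberwise removal of fixed points. After a fiberwise-generic perturbation $\Fix(g)\subset E$ is a submanifold of dimension $\dim B$ projecting to $B$; the class $R_B(g)$ in the fiberwise homology of the fiberwise twisted free loop space $\Lambda^g_B E$ records its components together with their loop data, and two families of fixed points whose data cancel can be removed by a parametrized Whitney move, whose cancelling tracks have dimension $\dim B+1$ and $\dim B+2$ and hence embed in the $(\dim B+3)$-dimensional fibers. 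Here I would invoke parametrized transversality together with smoothing and $h$-cobordism theory rather than reproving it.

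The equivariant bootstrap is a downward induction over the subgroup lattice of $C_n$, applied to $\phi=\Psi_B^n f$ with $R_{B,C_n}(\Psi_B^n f)$ vanishing. Clear fixed points first on the minimal stratum $Y^{C_n}\cong E$, whose fibers are copies of $M$ of dimension exactly $\dim B+3$: the geometric-$C_n$-fixed-point restriction of the equivariant Reidemeister trace there is the ordinary fiberwise Reidemeister trace of $\phi^{C_n}$ (identified with $f$ under $Y^{C_n}\cong E$), which vanishes because geometric fixed points carries the zero map to the zero map, so the base case removes its fixed points and a $C_n$-equivariant collar extends the deformation to an invariant neighborhood. Passing to the next stratum $Y^{C_{n/p}}$ with $p$ prime, the obstruction to continuing is \emph{not} merely the fiberwise Reidemeister trace of $\phi$ there, but that class corrected by the gluing datum from the stratum just cleared; this corrected class is the image of the fiberwise Fuller trace in the corresponding layer of the isotropy-separation tower of its target (whose associated graded in the non-parametrized case is the tom Dieck splitting), and it vanishes because the Fuller trace does. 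Iterating down the lattice yields a $C_n$-equivariant fiberwise deformation of $\Psi_B^n f$ to a fixed-point-free family. I expect this step to be the main obstacle: in the non-parametrized setting the tom Dieck splitting makes the tower split, so the strata are cleared independently and the Fuller trace reduces to the collection of iterate Reidemeister traces, whereas parametrically the tower does not split --- exactly the phenomenon behind \cref{thm:main} --- so one must prove that the associated graded of the tower matches the stratum-by-stratum relative obstruction groups and that every layer, including the higher gluing data invisible to the iterate traces, is detected by vanishing of the single class $R_{B,C_n}(\Psi_B^n f)$.

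Finally, a parametrized version of Jezierski's cancellation argument converts the resulting $C_n$-equivariant fiberwise fixed-point-free deformation of $\Psi_B^n f$ back into a fiberwise $n$-periodic-point-free deformation of $f$. Over the lower strata --- already cleared in the induction, hence carrying no fixed points --- the remaining fixed locus of $\Psi_B^n f$ consists of points of exact period $n$, on an invariant neighborhood of which the $C_n$-action is free; the deformation therefore descends to $E^{\times_B n}/C_n$ near that locus and pulls back along $x\mapsto(x,f(x),\dots,f^{n-1}(x))$ to a fiberwise deformation of $f$ removing its $n$-periodic points, which a fiberwise homotopy-extension argument spreads over the rest of $E$. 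I expect this last step to be routine once the parametrized equivariant Wecken theorem is available.
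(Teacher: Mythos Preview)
The statement you are attempting to prove is not a theorem of the paper; it is explicitly labeled as a \emph{conjecture} (attributed to \cite[Conjecture 1.8]{mp2}), and the paper says directly that the authors ``plan to follow up the present paper by investigating'' it in future work. There is therefore no proof in the paper to compare your proposal against. The paper's content is devoted to the orthogonal result (\cref{thm:main}) that the fiberwise Fuller trace is strictly stronger than the collection of fiberwise Reidemeister traces of iterates, not to establishing that it is a complete obstruction.

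As to your proposal itself: it is a plausible high-level strategy, and you have correctly identified where the real difficulty lies --- the parametrized isotropy-separation tower does not split (this is exactly the phenomenon the paper exploits in \cref{thm:main}), so the inductive clearing of strata requires matching the layers of that tower to the relative stratum-by-stratum obstruction groups and showing that vanishing of the single Fuller class kills every layer. But none of the substantive steps are actually carried out. The ``non-equivariant base case'' (a parametrized Wecken theorem for fiber dimension $\ge \dim B + 3$) is asserted rather than proved; the identification of the associated graded of the isotropy tower with the geometric relative obstruction groups is not established; and the final Jezierski-type descent from a $C_n$-equivariant fixed-point-free deformation of $\Psi_B^n f$ to an $n$-periodic-point-free deformation of $f$ is more delicate than you suggest, since not every $C_n$-equivariant deformation of $\Psi_B^n f$ arises from a deformation of $f$. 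What you have written is an outline of a research program, not a proof, which is consistent with the status of the statement as an open conjecture.
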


\subsection{Organization}

In section two, we provide background on invariants of fixed and periodic points. In section three, we give an explicit construction which realizes \cref{thm:main}. Sections four is devoted to proving \cref{thm:main}.

\subsection{Acknowledgments}

Many thanks to John Klein, Cary Malkiewich, and Kate Ponto for helpful conversations and suggestions. Thanks also to the organizing and scientific committees of the 2022 workshop ``Homotopical Methods in Fixed Point Theory" which served as the author's introduction to this subject matter. This paper represents a part of the author's dissertation written under the supervision of Cary Malkiewich at Binghamton University. 

\section{Preliminaries}

In this section we give the necessary background for the paper. We will begin with a minimal discussion on equivariant parameterized stable homotopy theory. We will then cover the relevant background in fixed point theory. Finally, since our main computations will be cobordism theoretic, we will give a few preliminaries on this topic as well.

\begin{df}
Let $B$ be a $G$-space. A \textbf{retractive $G$-space over $B$} is a $G$-space $X$ equipped with an equivariant retract $p\colon X\to B$. 
\end{df}

\begin{df}\label{df:external smash}
Let $B$ be a $G$-space and $X$ a retractive $G$-space over $B$. Let $A$ be a $G$-space which we will think of as a retractive $G$-space over $*$. The \textbf{external smash product of $A$ and $X$}, denoted $A\barsmash X$, is a retractive $G$-space over $B$ defined as the pushout:
\begin{figure}[H]
\center
\begin{tikzcd}
A\times B\cup_{*\times B} *\times X \arrow[r]\arrow[d] & A\times X\arrow[d]\\
*\times B \arrow[r]& \arrow[ul, phantom, "\ulcorner", very near start]A\barsmash  X.
\end{tikzcd}
\end{figure}
\end{df}

\begin{df}\label{df:suspension}
Let $B$ be a $G$-space and $X$ a retractive $G$-space over $B$. The \textbf{suspension of $X$ over $B$} is $\Sigma_B X=S^1\barsmash X$. Note that the $k$-fold suspension of $X$ over $B$ is equivariantly homeomorphic to $S^k\barsmash X$. 
\end{df}

\begin{df}
Let $B$ be a $G$-space which we regard as a $(G\times O(n))$-space by letting $O(n)$ act trivially. A \textbf{parameterized orthogonal $G$-spectrum over $B$} is a sequence of retractive $(G\times O(n))$-spaces over $B$, denoted $X_n$, equipped with $G$-equivariant structure maps $\Sigma_BX_n\to X_{n+1}$ such that the composite 
\[
\Sigma_B^p X_q \to \dots \to X_{p+q}
\]
is $(O(p)\times O(q))$-equivariant. 
\end{df}

\begin{rmk}
In the previous definition, when $B$ is a single point, we recover the definition of an orthogonal $G$-spectrum. When $G$ is the trivial group we recover the definition of an orthogonal parameterized spectrum over $B$. 
\end{rmk}

\begin{ex}
If $B$ is a $G$ space and $X$ a retractive $G$-space over $B$, the equivariant suspension spectrum of $X$ over $B$, denoted $\Sigma_B^\infty X$, is the spectrum which at level $n$ is $\Sigma_B^nX$ with structure maps given by the homeomorphisms $\Sigma_B\Sigma_B^nX\to \Sigma_B^n\Sigma_B X \cong \Sigma_B^{n+1} X$. If $X$ is any $G$-space equipped with a map to $B$, we may form a retractive $G$-space by taking the disjoint union with the identity map, $X\amalg B\to B$. We denote this space $X_{+B}$. We denote its parameterized suspension and its parameterized suspension spectrum as $\Sigma_{+B} X$ and $\Sigma^\infty_{+B} X$ respectively. 
\end{ex}

\begin{df}
Let $B$ be a $G$-space and $X$ a parameterized orthogonal $G$-spectrum over $B$. The \textbf{spectrum of sections of $X$}, denoted $\Gamma_B X$, is an orthogonal $G$-spectrum (over $*$) with level $n$ given by the space of $G$-equivariant maps $B\to X_n$ that compose with $X_n\to B$ to the identity. For details regarding the structure maps see \cite[Section 4.3]{malkiewich_convenient}.
\end{df}

\begin{df}
Let $X$ be an orthogonal $G$-spectrum, $V$ a finite dimensional real orthogonal $G$-representation, and $\rho$ the regular representation of $G$. Define the \textbf{$V$th equivariant stable homotopy group of $X$} as 
\[
\pi_V^G(X) = \underset{n\to\infty}{\colim}[S^{n\rho\oplus V},X(n\rho)]_*^G
\]
where the colimit on the right is of based $G$-homotopy classes of continuous $G$-maps.
\end{df}

Now that we have set up the necessary definitions for equivariant parameterized stable homotopy theory, we will move on to discuss fixed point theory. 

\begin{df}\label{df:Reidemeister_trace}
Let $M$ be a closed smooth (or PL) manifold and $f:M\to M$ a continuous map. Define the twisted free loop space of $f$ to be $\Lambda^f M = \{\gamma\colon I\to M\mid f(\gamma(1))=\gamma(0)\}$. Embed $M$ into an open subspace $U$ of $\R^N$ such that there is a retract $p:U\to M$ (making $M$ into a compact ENR) and an $\epsilon$-tube about $M$ contained in $U$. The \textbf{Reidemeister trace of $f$}, denoted $R(f)$, is an element of $\pi_0(\Sigma_+^\infty \Lambda^f M)$ which we define to be the following colimit of homotopy classes of based maps:
\[
\pi_0(\Sigma_+^\infty \Lambda^f M)=\underset{N\to \infty}{\colim} [S^N, S^N \sma_+(\Lambda^f M)]_*.
\]
Specifically, we define $R(f)$ as
\begin{align*}
S^N & \to S^N_\epsilon \wedge_+ (\Lambda^f M)\\
v & \mapsto \begin{cases}
(v-f(p(v))) \wedge \gamma_{f(p(v)),v} & \text{ if $v\in V$ and $\|v-f(p(v))\| \leq \epsilon$}\\
* & \text { else}.
\end{cases}
\end{align*}
Here the source $S^N$ is the one point compactification of $\R^N$ and the target $S^N_\epsilon$ is a sphere of radius $\epsilon$ obtained by quotienting the complement of an open ball of radius $\epsilon$ in $\R^N$:
\[
S^N_\epsilon = \R^N / (\R^N - B_\epsilon).
\]
The path $\gamma_{f(p(v)),v}$ is defined by
\[
\gamma_{f(p(v)),v}(t) = p[(1-t)f(p(v)) + tv]
\]
\end{df}

The map $p$ is defined on the straight line from $f(p(v))$ to $v$ because of our condition on $\epsilon$. As mentioned in the introduction, $R(f)$ is a complete obstruction to the removal of fixed points from $f$ as long as $\dim(M)\geq 3$. In other words $f$ is homotopic to a map with no fixed points if and only if $R(f)$ is null-homotopic for $\dim(M)\geq 3$.

\begin{df}
Given $f:M\to M$ we define the \textbf{Lefschetz trace of $f$}, denoted $L(f)$, to be the composition of $R(f)$ with the map induced by $\Lambda^f M \to *$. This is an element of $\pi_0(\Sph) = \Z$.
\end{df} 

We now discuss periodic point invariants. Note that $M^{\times n}$ is a $C_n$-manifold where the action cycles the factors of the product. The $n$th Fuller trace of $f:M\to M$, denoted $R_{C_n}(\Psi^nf)$, is given by applying the $C_n$-equivariant Reidemeister trace to the map $\Psi^n(f)$ described in the introduction. The equivariant Reidemeister trace is defined the same as the non-equivariant version except that all the spaces are equipped with a group action, and all the maps are equivariant. In particular, the  Fuller trace, $R_{C_n}(\Psi^n f)$,  is an element of 
\[
\pi_0^{C_n}=\underset{n\to\infty}{\colim} [S^{n\rho},S^{n\rho}\wedge_+ (\Lambda^{\Psi^nf} M^{\times n})]_*
\]
where $\rho$ is the regular $C_n$-representation and the colimit is of $C_n$-equivariant based maps. We will introduce a specific formula for this later in the paper. For a general formula, see \cite[Section 2]{mp2}.

The fiberwise Reidemeister trace of a family of endomorphisms $f\colon E\to E$ over $B$ is a map of parameterized spectra
\[
R_B(f)\colon B\times \Sph \rightarrow \Sigma_{+B}^\infty \Lambda_B^f E.
\]
where 
\[
\Lambda_B^f E = B\times_{\Lambda^f B}\Lambda^f E
\]
is the space of fiberwise twisted free loops. More concretely, the fiberwise Reidemeister trace can be defined as a map
\[
B\times S^N \to \Sigma_{+B}^N \Lambda_B^f E. 
\]
The construction of the fiberwise Reidemeister trace is given by first fiberwise embedding $E$ into $B\times \R^N$ and then taking the Reidemeister trace in each fiber. Note that $R_B(f)$ evaluated at $\{b\}\times \Sph$ is precisely $R(f_b)$. The fiberwise Lefschetz trace, $L_B(f)$ is constructed by composing $R_B(f)$ with the map induced by $\Lambda_B^f E \to B$.

The \textbf{fiberwise Fuller trace of $f$}, denoted $R_{B,C_n}(\Psi_B^n f)$, is defined by taking the $C_n$-equivariant fiberwise Reidemeister trace of the fiberwise Fuller construction of $f$. The fiberwise Fuller construction of $f$ is defined in the same way as the usual Fuller construction except that it is a self map of the $n$-fold fiber product of $E$ over $B$, denoted by $E^{\times_B^n}$.

One advantage of describing fixed/periodic point invariants as maps of spectra is that we may use Pontryagin-Thom type results to reinterpret these invariants as cobordism classes of framed manifolds. This will be one of the main techniques used in this paper. We will now give the necessary background on cobordism theory before stating the Pontryagin-Thom type result that will be our main computational tool in this paper. 

\begin{df}
Let $X$ be a $G$-space. Let $B$ be a closed $G$-manifold and $\psi\colon X\to B$ a $G$-equivariant map.  A \textbf{singular $G$-manifold over $\psi$} is a triple, $(M,f,\phi)$ where $M$ is a closed $G$-manifold, $f\colon M\to X$ and $\phi\colon M\to B$ are $G$-equivariant maps such that $\phi=\psi\circ f$.
\end{df}

\begin{notn}
Let $M$ be a $G$-manifold and $V$ a finite dimensional orthogonal real $G$-representation. We denote the trivial $V$ bundle on $M$ as $\varepsilon_M(V)= M\times V$. 
\end{notn}

\begin{df}
Let $M$ be a compact $G$-manifold and $\phi\colon M\to B$ a $G$-equivariant map. A \textbf{$V$-framing of $M$ over $B$} is an equivalence class of $G$-vector bundle isomorphisms
\[
TM\oplus \varepsilon_M(\R^{k})\cong \phi^*(TB)\oplus \varepsilon_M(V\oplus\R^k)
\]
where the equivalence relation is given as follows. We say two $G$-vector bundle isomorphisms are equivalent if they are $G$-homotopic over $M$. We also say that a $G$-vector bundle isomorphism
\[
TM\oplus \varepsilon_M(\R^{k+1})\cong \phi^*(TB)\oplus \varepsilon_M(V\oplus\R^{k+1})
\]
is equivalent to 
\[
TM\oplus \varepsilon_M(\R^{k})\cong \phi^*(TB)\oplus \varepsilon_M(V\oplus\R^k)
\]
if it is obtained by extending to the identity in the $(k+1)$ coordinate of every fiber. 
\end{df}

\begin{df}
A \textbf{$V$-framed cobordism element over $\psi:X\to B$} is a singular $G$-manifold over $\psi$ equipped with a $V$-framing over $B$. 
\end{df}

\begin{df}
Two $V$-framed cobordism elements $(M,f,\phi)$ and $(M',f'\phi')$ over $\psi\colon X\to B$ are \textbf{$V$-framed cobordant over $\psi\colon X\to B$} if there exists, $(W,q,p)$, a singular $G$-manifold over $\psi\colon X\to B$ such that:
\begin{description}
\item[(i)] $W$ is a $G$-manifold with a $(V\oplus \R)$-framing over $B$.
\item[(ii)] The boundary of $W$ is $M\amalg M'$. 
\item[(iii)] The induced $V$-framing over $B$ on $\partial W$ restricted to $M$ and $M'$ agree with the framings over $B$ on $M$ and $-M'$ respectively. Here $-M'$ is the framed manifold given by negating the $n^{th}$ coordinate of $\R^n$ in the framing of $M'$. 
\item[(iv)] $q\colon W\to X$ is an equivariant map such that $q|_{M}=f$ and $q|_{M'}=f'$. 
\item[(v)] $p\colon W\to B$ is an equivariant map such that $p|_M=\phi$ and $p|_{M'}=\phi'$.  
\end{description}
We will frequently shorten ``$V$-framed cobordism over $\psi\colon X\to B$" to ``cobordism" when the context is clear.
\end{df}

\begin{notn}
Define $\omega_{V}^G(X\xrightarrow{\psi} B)$ to be the abelian group of $V$-framed $G$-manifolds over $\psi\colon X\to B$ up to cobordism. Note that while the elements of $\omega_{V}^G(X\to B)$ are manifolds of dimension $\dim(V)+\dim(B)$, it is more accurate to think of them as families of manifolds of dimension $\dim(V)$ parameterized over $B$.
\end{notn}

We are now ready to state our main computational tool, the Pontryagin-Thom isomorphism for families of framed $G$-manifolds.

\begin{thm}\cite[Theorem 1.2]{williams_framed_PT}\label{thm:equivariant parameterized PT}
Let $G$ be a finite group, $V$ a finite dimensional orthogonal $G$-representation, $B$ a closed compact $G$-manifold, and $X \to B$ an equivariant Serre fibration. There is an isomorphism of abelian groups
\[
\omega_{V}^G(X\xrightarrow{\psi} B) \cong \pi_V^G\left( \Gamma_B(\Sigma_B^\infty (X\amalg B)) \right)
\]
from the group of $V$-framed $G$-manifolds over $\psi\colon X\to B$ to the $V$th equivariant stable homotopy group of the spectrum of sections of the parameterized suspension spectrum of $X\amalg B$ over $B$. 
\end{thm}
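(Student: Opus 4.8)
I would prove this by constructing mutually inverse homomorphisms directly: a parameterized, equivariant Pontryagin--Thom collapse map $\omega_V^G(X\xrightarrow{\psi}B)\to\pi_V^G(\Gamma_B\Sigma_B^\infty(X\amalg B))$, and a ``preimage of a regular value'' map going back. When $B$ is a point this recovers the equivariant Pontryagin--Thom isomorphism; the new content is to run that argument fiberwise over $B$ and to accommodate the tangent twist $\phi^*TB$ in the definition of a $V$-framing over $B$.

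For the collapse map I start with a $V$-framed singular $G$-manifold $(M,f,\phi)$ over $\psi$. Choose $W=n\rho$ with $n$ large and an equivariant embedding $M\hookrightarrow B\times W$ lying over $B$ (i.e.\ projecting to $\phi$); this exists by combining $\phi$ with an equivariant embedding $M\hookrightarrow W$. The normal bundle $\nu$ then satisfies $TM\oplus\nu\cong\phi^*TB\oplus\varepsilon_M(W)$, and feeding in the $V$-framing $TM\oplus\varepsilon_M(\R^k)\cong\phi^*TB\oplus\varepsilon_M(V\oplus\R^k)$ and cancelling $TM$ and $\phi^*TB$ (legitimate after one more stabilization, since $M$ and $B$ are compact) shows that after fiberwise smashing with $S^V$ the fiberwise Thom space of $\nu$ becomes $S^W\barsmash M_{+B}$. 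Collapsing the complement of an equivariant tubular neighborhood of $M$ in $B\times W$ produces a map of retractive $G$-spaces over $B$ from the fiberwise sphere $B\times S^W$ to that Thom space; smashing with $S^V$ and postcomposing with the map $M_{+B}\to X\amalg B$ induced by $f$ (a map over $B$ because $\psi f=\phi$) yields a map over $B$ of the shape $B\times S^{V\oplus W}\to S^W\barsmash(X\amalg B)$. Since the section functor $\Gamma_B$ is right adjoint to pullback along $B\to *$, this is the same as a $G$-map $S^{V\oplus W}\to\bigl(\Gamma_B\Sigma_B^\infty(X\amalg B)\bigr)(W)$, hence, as $n\to\infty$, a class in $\pi_V^G(\Gamma_B\Sigma_B^\infty(X\amalg B))$. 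I would then check this is independent of the embedding and tubular neighborhood (isotopy), compatible with the stabilization $n\rho\mapsto(n+1)\rho$, carries a cobordism over $\psi$ -- embedded in $B\times W\times[0,1]$ compatibly with its $(V\oplus\R)$-framing -- to a homotopy, and takes disjoint union to the sum. The hypothesis that $X\to B$ is a Serre fibration enters here, ensuring the retractive space $X\amalg B$ over $B$, and hence its parameterized suspension spectrum, is homotopically well behaved, so that the colimit of $G$-homotopy classes in the statement really computes $\pi_V^G(\Gamma_B\Sigma_B^\infty(X\amalg B))$.

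For the inverse I represent a class by a map over $B$, $\bar g\colon B\times S^{V\oplus W}\to S^W\barsmash(X\amalg B)$, with $W=n\rho$. Viewing $S^W\barsmash(X\amalg B)$ as $X\times W$ glued into $B$ at infinity, I restrict $\bar g$ over the open set $\bar g^{-1}(X\times W)$ and compose with the projection to $W$ to get an equivariant map $h$ to the vector space $W$; after an equivariant homotopy making $h$ transverse to $0\in W$ I set $M:=h^{-1}(0)$, with $f\colon M\to X$ the $X$-coordinate of $\bar g|_M$ and $\phi$ the projection to $B$, so that $\psi f=\phi$ automatically. Then $M$ is a closed $G$-manifold of dimension $\dim V+\dim B$ (it is the preimage of a closed set and lies in the finite part $B\times(V\oplus W)$, hence compact); $dh$ trivializes its normal bundle as $\varepsilon_M(W)$, and combined with $T(B\times(V\oplus W))|_M\cong\phi^*TB\oplus\varepsilon_M(V\oplus W)$ this produces exactly a $V$-framing of $M$ over $B$. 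Homotopic $\bar g$'s, made transverse rel endpoints, give cobordant triples, so the assignment is well defined and additive, and it is inverse to the collapse map by the usual arguments: the collapse of a transverse preimage is equivariantly homotopic to the original map, and the transverse preimage of a collapse map is isotopic to the original manifold.

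The crux is the equivariant transversality used in the inverse construction. A $G$-map is not in general equivariantly homotopic to one transverse to a given $G$-submanifold: at a point $a$ with stabilizer $G_a$ one may only perturb by $G_a$-equivariant data, so the relevant fiber -- here the target $W$ of $h$ -- must be a quotient of $T_aA$ as a $G_a$-representation. In our situation the source of $h$ is an open subset of $B\times S^{V\oplus W}$, whose tangent representation at every point contains $V\oplus W$ and hence contains $W$, so an equivariant perturbation with surjective differential onto $W$ always exists and transversality to $0\in W$ can be achieved (and likewise rel boundary, for the cobordism statement). This is precisely why the theorem must be phrased with the colimit over $n\rho$: stabilizing only by trivial representations would not create enough room. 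Beyond this, the remaining work -- reconciling the trivial-representation stabilization in the definition of a $V$-framing with the representation $n\rho$ appearing in the collapse, the stable-range bundle cancellations, and the $\Gamma_B$--pullback adjunction bookkeeping -- is routine.
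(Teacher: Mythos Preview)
This theorem is not proved in the present paper: it is quoted verbatim from \cite[Theorem 1.2]{williams_framed_PT}, and the paper later refers to its proof as appearing in \cite[Theorem 3.60]{williams_framed_PT}. So there is no proof here to compare against.

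That said, your outline is the expected one and matches what the cited reference carries out. You correctly identify the two directions as a fiberwise Pontryagin--Thom collapse and a transverse-preimage construction, you handle the bookkeeping with $\phi^*TB$ that distinguishes a $V$-framing \emph{over $B$} from an ordinary stable framing, and you isolate the genuine technical point: equivariant transversality to $0\in W$ is only available because you have stabilized by copies of the regular representation $\rho$, so that at every point the tangent representation surjects onto $W$. Your remark about why the Serre fibration hypothesis on $X\to B$ is needed (so that $\Gamma_B$ need not be derived) is also consistent with the paper's own Remark following the theorem statement. In short, your sketch is sound and is essentially the argument the paper is citing, but there is nothing in this paper itself to compare it to.
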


\begin{rmk}
In reference to the above theorem, an equivariant Serre fibration is a map which is a Serre fibration on all of the fixed point subsets. Additionally, in the above theorem we would usually right derive the spectrum of sections functor $\Gamma_B$. However, the assumption that $X$ is an equivariant Serre fibration over $B$ makes this unnecessary as explained in \cite[Lemma 3.47]{williams_framed_PT}. Throughout this paper, each application of the above theorem will fortuitously be in a setting where the map $X\to B$ happens to be a Serre fibration. 
\end{rmk}

Our strategy in proving \cref{thm:main} will be to interpret various Reidemeister traces as elements of (equivariant) stable homotopy groups of spectra of sections. Then we will reinterpret these traces as framed cobordism classes using \cref{thm:equivariant parameterized PT}. We will then use a mixture of cobordism theory and stable homotopy theory to compute these traces.

\section{The main construction}

In this section we provide an explicit construction that realizes \cref{thm:main}.

\begin{const}\label{const:example}
Let $n\geq 3$, let $B=S^1$ and let $E_\theta$ be the following space for each $\theta\in S^1$: build a necklace of 4 spheres alternating dimension between $S^n$ and $S^{n-1}$ where each sphere is joined to the next by a unit interval as depicted in \cref{fig:necklace of spheres}, such that each interval joins the north pole of a sphere to the south pole of the next sphere in the necklace. In the figure below we have labeled the north poles $n$ and south poles $s$. Define the total space $E$ to be $E_\theta\times B$, the trivial $E_\theta$ bundle over $S^1$.

\begin{figure}[!htb]
\minipage{0.32\textwidth}
  \includegraphics[width=\linewidth]{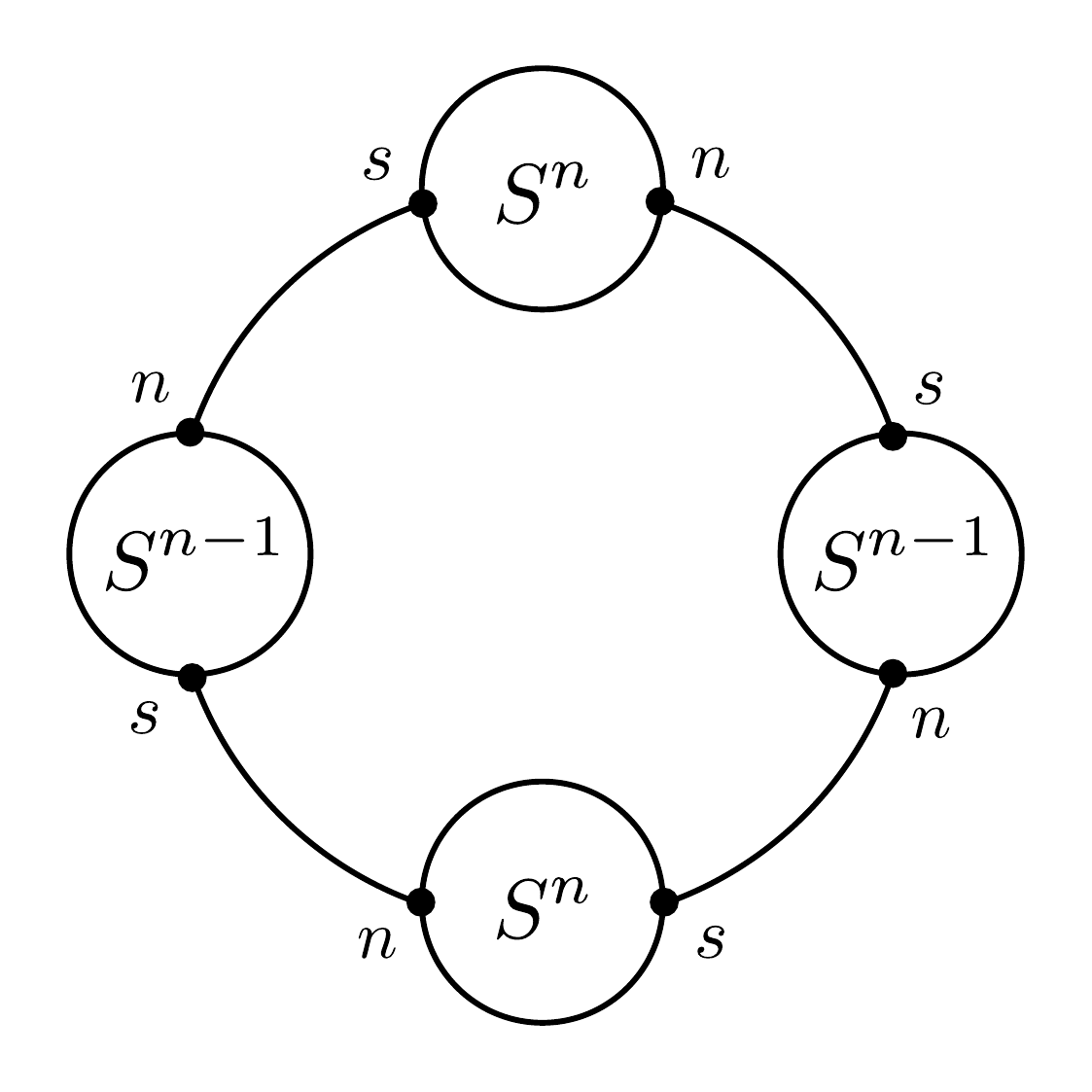}
  \captionsetup{labelsep=newline}
  \caption{The space $E_\theta$}\label{fig:necklace of spheres}
\endminipage\hfill
\minipage{0.32\textwidth}
  \includegraphics[width=\linewidth]{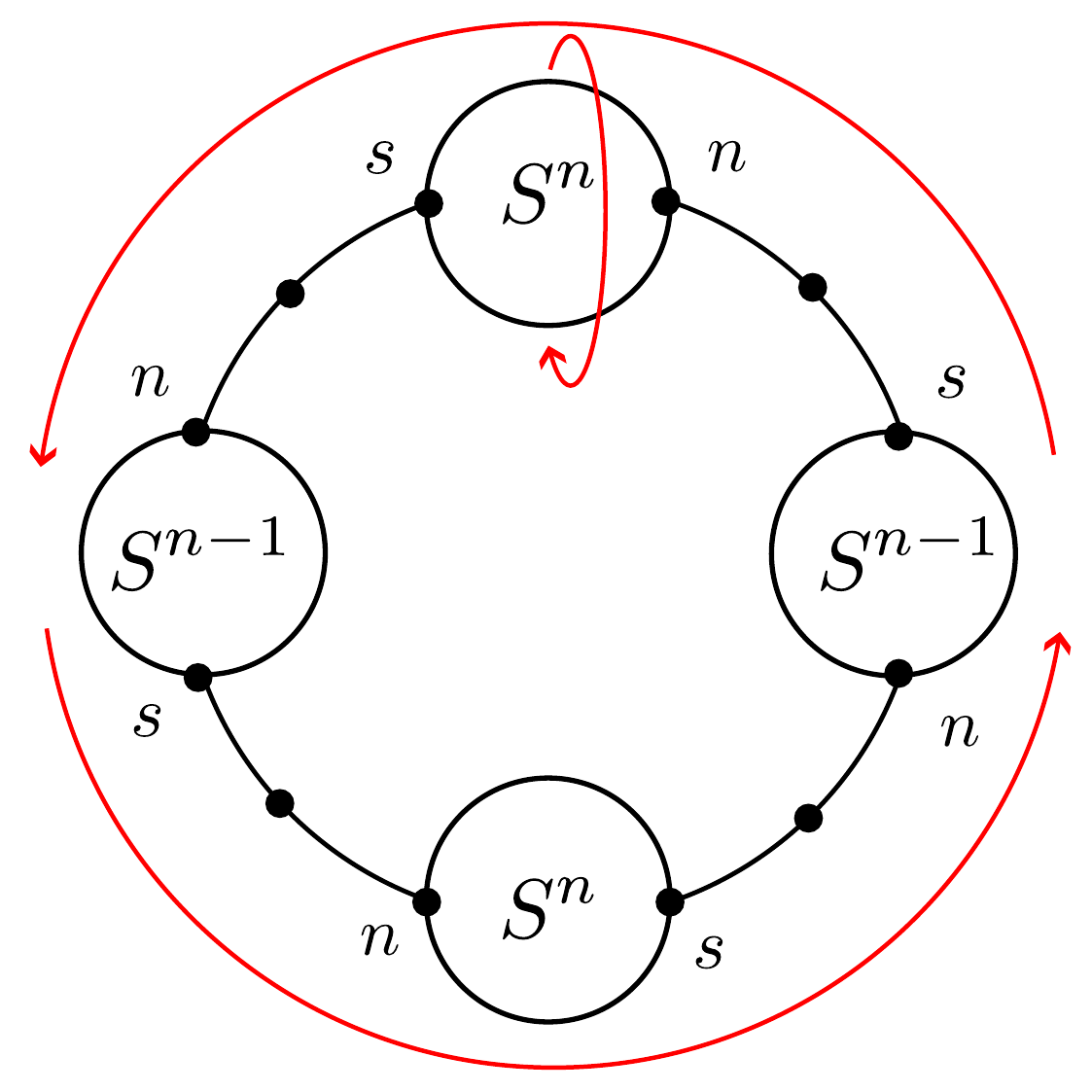}
  \captionsetup{labelsep=newline}
  \caption{The map $f_\theta$}\label{fig:necklace of spheres map}
\endminipage\hfill
\minipage{0.33\textwidth}%
  \includegraphics[width=\linewidth]{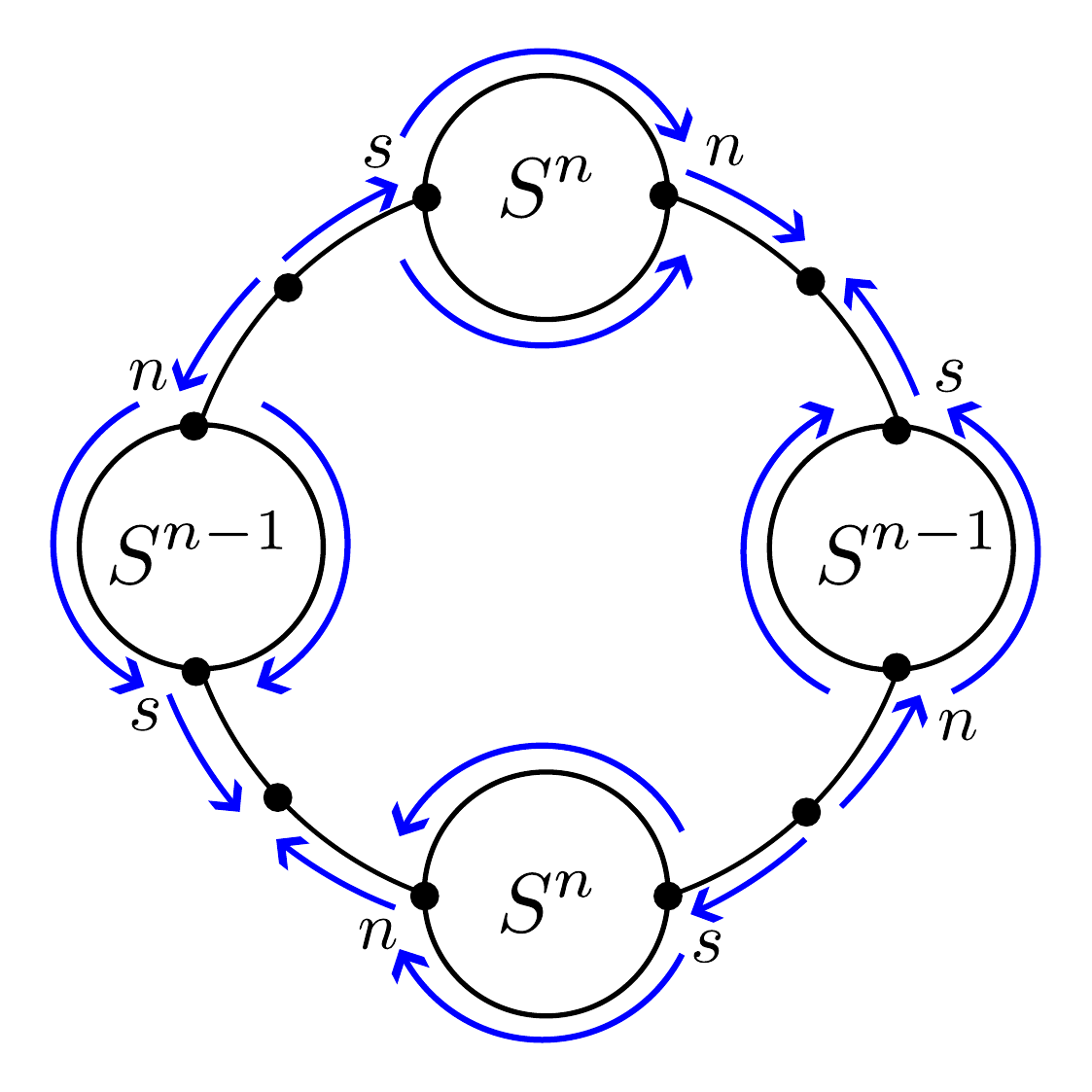}
  \captionsetup{labelsep=newline}  
  \caption{The deformation $h$}\label{fig:necklace of spheres deformation}
\endminipage
\end{figure}

In order to define $f:E\to E$, we need only define $f_\theta$. We define $f_\theta:E_\theta\to E_\theta$ to be the map which rotates the necklace of spheres half way around and spins two coordinates of one copy of $S^n$ by an angle of $\theta$ as depicted in \cref{fig:necklace of spheres map}. In other words, we rotate the necklace half way around then apply the block diagonal matrix, $(a_{ij})$, which is the identity everywhere except for the block $\begin{bmatrix}
a_{2,2}& a_{2,3}\\a_{3,2}&a_{3,3}
\end{bmatrix} = \begin{bmatrix}
\cos(\theta) &-\sin(\theta)\\\sin(\theta) & \cos(\theta)
\end{bmatrix}$ to one copy of $S^n$ (modeled as the unit sphere in $\R^{n+1}$).

While $f$ has no fixed points, $f^2$ has infinitely many non-isolated fixed points. In order to isolate the fixed points of $f_\theta^2$, we will define a map, $h$, homotopic to the identity map. Then $f_\theta^2$ composed with $h$ is homotopic to $f_\theta^2$ with the added benefit that the composition has isolated fixed points. We will define $h$ piecewise as depicted in \cref{fig:attracting interval,fig:repelling interval,fig:n-sphere,fig:(n-1)-sphere}. We have denoted the fixed points by the black dots between the arrows. When it becomes necessary later in the paper, we will label each fixed point. 

\begin{figure}[!htb]
\minipage{0.24\textwidth}
  \includegraphics[width=\linewidth]{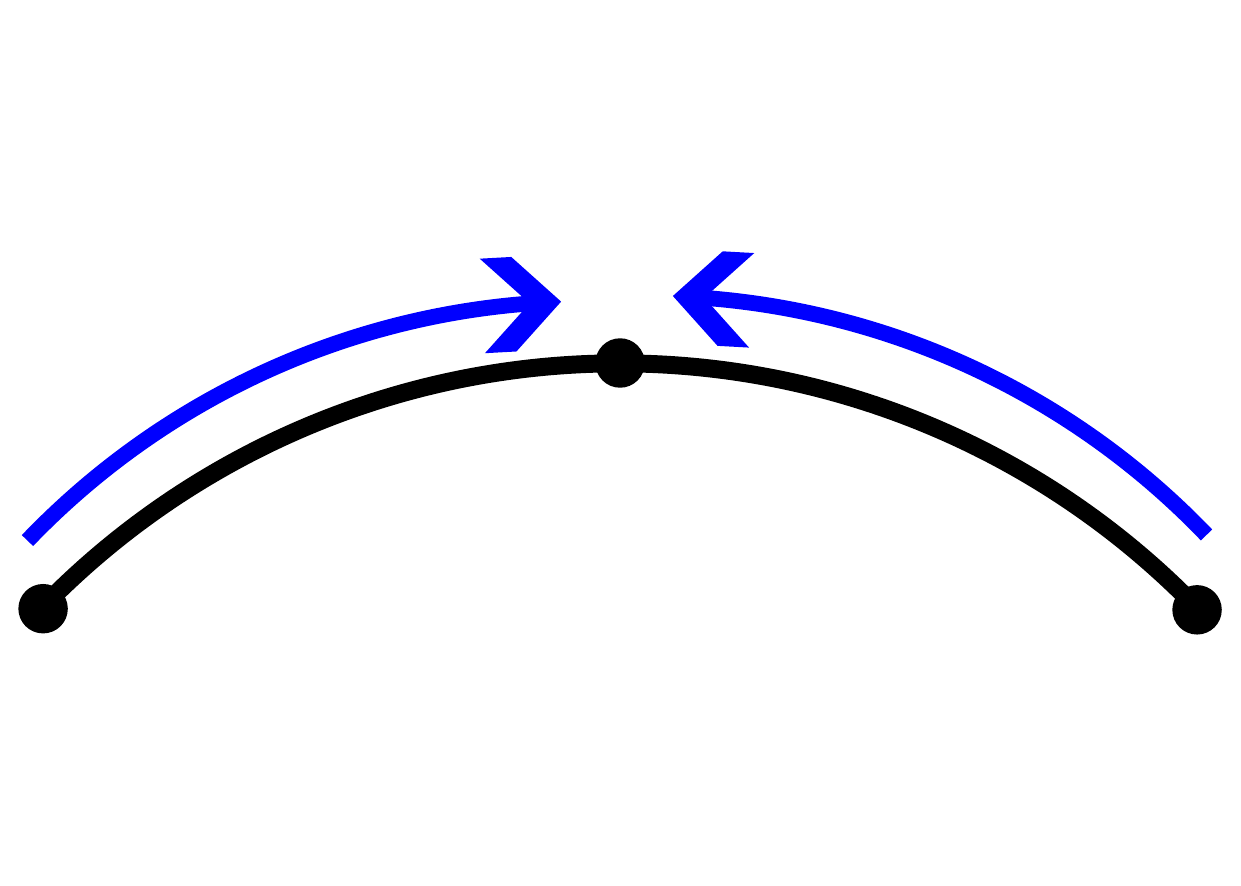}
  \caption{}\label{fig:attracting interval}
\endminipage\hfill
\minipage{0.24\textwidth}
  \includegraphics[width=\linewidth]{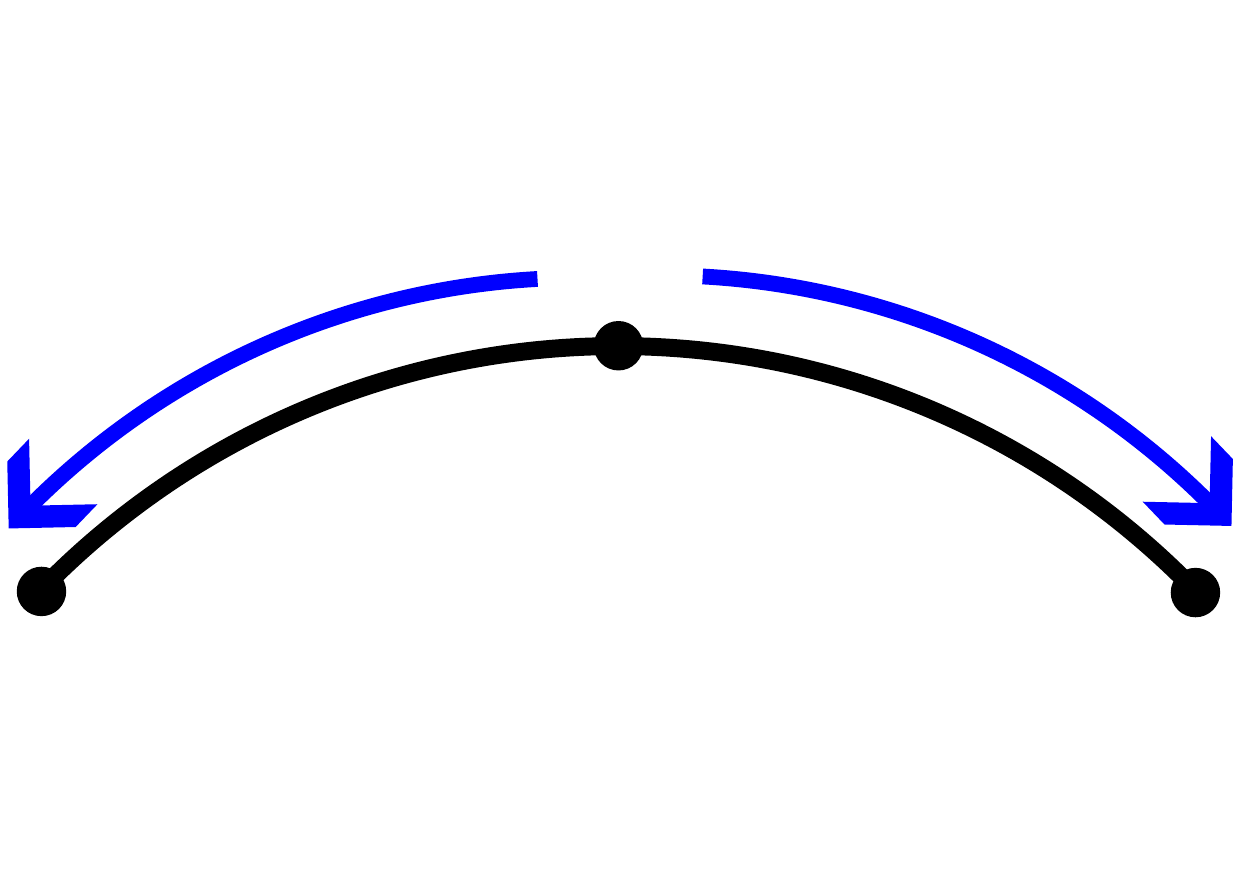}
  \caption{}\label{fig:repelling interval}
\endminipage\hfill
\minipage{0.24\textwidth}%
  \includegraphics[width=\linewidth]{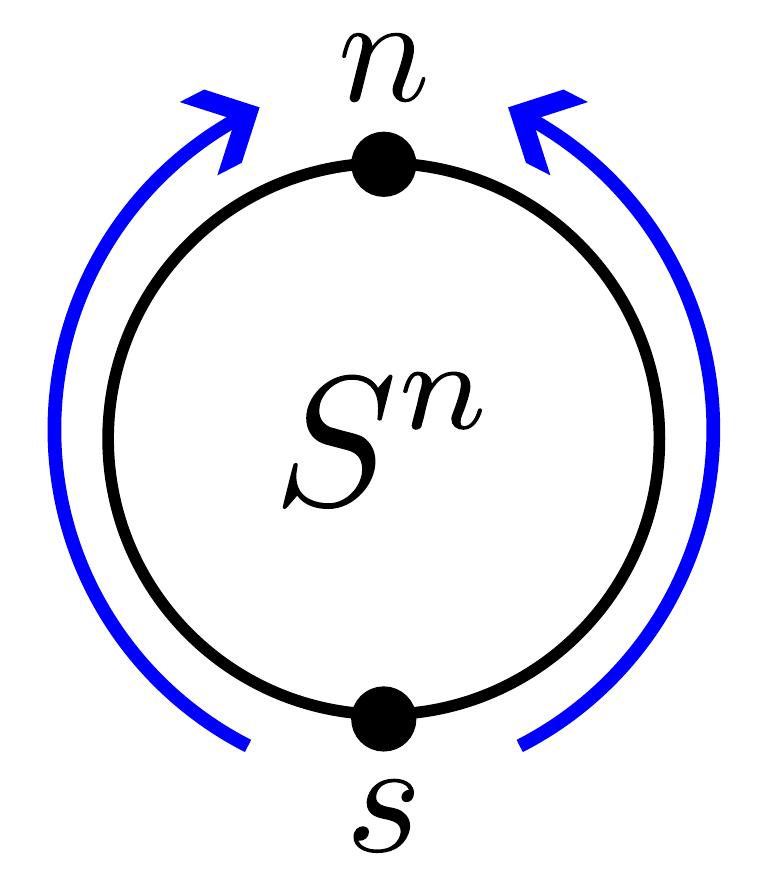}
  \caption{}\label{fig:n-sphere}
\endminipage
\minipage{0.24\textwidth}%
  \includegraphics[width=\linewidth]{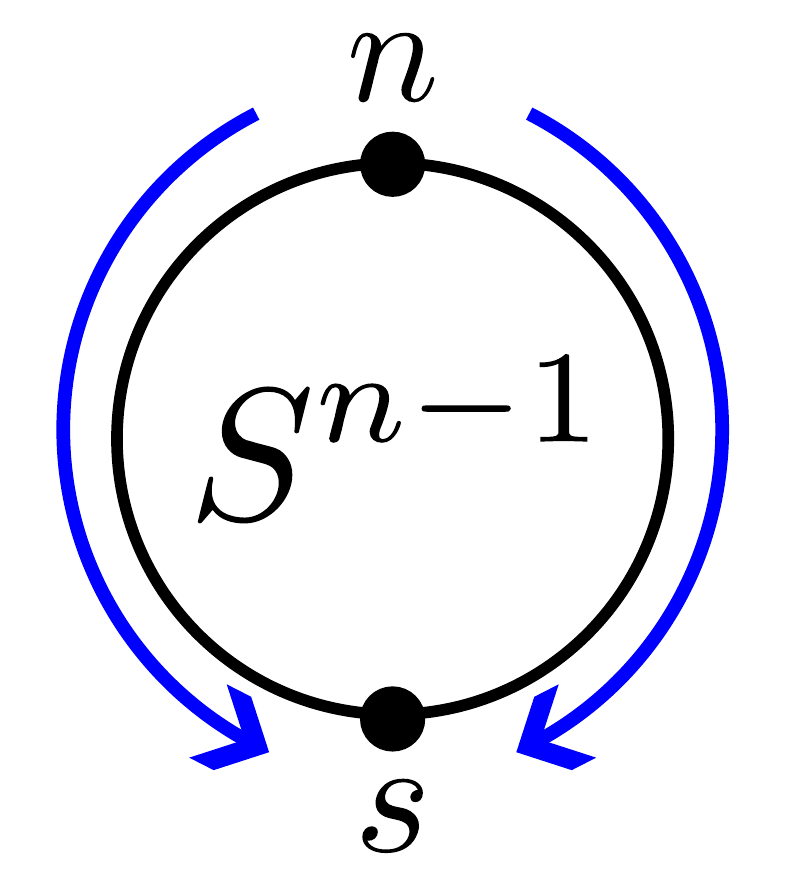}
  \caption{}\label{fig:(n-1)-sphere}
\endminipage
\end{figure}

On each copy of $I=[0,1]$ of the form shown in \cref{fig:attracting interval} we apply the map
\begin{align*}
I & \longrightarrow I\\
t & \mapsto \begin{cases}
t[-(t-1/4)^2+(1+1/16)] & \text{ if } 0\leq t\leq 1/2\\
t[(t-3/4)^2+(1-1/16)] & \text{ if } 1/2\leq t\leq 1
\end{cases}
\end{align*}

On each copy of $I=[0,1]$ of the form shown in \cref{fig:repelling interval} we apply the map
\begin{align*}
I & \longrightarrow I\\
t & \mapsto \begin{cases}
t[(t-1/4)^2+(1-1/16)] & \text{ if } 0\leq t\leq 1/2\\
t[-(t-3/4)^2+(1+1/16)] & \text{ if } 1/2\leq t\leq 1
\end{cases}
\end{align*} 

On each copy of $S^n$ of the form shown in \cref{fig:n-sphere} we define $h'$ as follows. Model $S^n$ as the unit sphere in $\R^{n+1}$. Define a smooth vector field
\begin{align*}
X\colon S^n & \to T\R^{n+1}\\
x& \mapsto (x,(1,0,\dots,0)).
\end{align*}
Let $\widetilde{X}\colon S^n\to TS^n$ be the smooth vector field given by orthogonally projecting each vector in the image of $X$ to the tangent space of $S^n$. We think of $\tilde{X}$ as being the consistently northward pointing vector field on $S^n$. The map on $S^n$ is then defined by flowing along $\widetilde{X}$ to time $t=1$. 

On each copy of $S^{n-1}$ depicted in \cref{fig:(n-1)-sphere} we apply the same construction as in $S^n$ except we choose a southward pointing vector field as opposed to a northward pointing vector field. 

These maps then assemble to the piecewise map $h\colon E_\theta\to E_\theta$ homotopic to the identity.  
\end{const}

\section{Proof of the main theorem}

\subsection{Fiberwise Reidemeister traces of iterates}

In this section we will prove the first part of \cref{thm:main}. Namely, that the fiberwise Reidemeister traces of $f$ and $f^2$ from \cref{const:example} are trivial.

\begin{lem}\label{lem:trivial f}
The fiberwise Reidemeister trace
\[
R_{S^1}(f) \colon S^1\times \Sph \to \Sigma_{+S^1}^\infty \Lambda_{S^1}^{f} E
\]
of the map of \cref{const:example} is trivial.
\end{lem}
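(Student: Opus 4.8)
The plan is to show that $R_{S^1}(f)$ vanishes by exhibiting a fiberwise homotopy from $f$ to a fixed-point-free family. Since $R_B(-)$ is a fiberwise homotopy invariant and the Reidemeister trace of a fixed-point-free map is null (one simply observes that the defining map $B\times S^N \to \Sigma_{+B}^N \Lambda_B^f E$ lands in the basepoint section when $f_b$ has no fixed points), it suffices to check that $f_\theta \colon E_\theta \to E_\theta$ has no fixed points for \emph{any} $\theta$. First I would recall the description of $f_\theta$ from \cref{const:example}: it rotates the necklace of four spheres halfway around and then applies a rotation matrix to two coordinates of one $S^n$. Because the necklace has four beads and $f_\theta$ rotates it by two positions, each bead (and each connecting interval) is sent to a \emph{different} bead (resp. interval): no point can be fixed since $f_\theta$ moves every point off of the component it started on. Thus $f_\theta$ is fixed-point-free for every $\theta \in S^1$, and $f\colon E \to E$ is a fiberwise map with no fixed points at all.

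Having established this, the argument concludes as follows. The trivial bundle structure $E = E_\theta \times S^1$ means we may fiberwise embed $E$ into $S^1 \times \R^N$ and form the Reidemeister trace fiberwise; in each fiber $\{b\} \times S^N \to \Sigma^N_{+S^1}\Lambda^f_{S^1}E$ the formula of \cref{df:Reidemeister_trace} produces, for $v$ with $\|v - f(p(v))\| \le \epsilon$, the point $(v - f(p(v))) \wedge \gamma_{f(p(v)),v}$; but since $f_b$ has no fixed points and $E_\theta$ is compact, there is a uniform $\delta > 0$ with $\|x - f_b(x)\| \ge \delta$ for all $x$ and all $b$, so choosing $\epsilon < \delta$ forces every $v$ into the ``else'' case. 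Hence the representing map is constant at the basepoint section $S^1 \times * \hookrightarrow \Sigma^\infty_{+S^1}\Lambda^f_{S^1}E$, which is precisely the zero map $S^1 \times \Sph \to \Sigma^\infty_{+S^1}\Lambda^f_{S^1}E$.

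The only point requiring a little care — and the main obstacle, such as it is — is the uniformity of $\delta$ over the base: one needs that $\inf_{b,x}\|x - f_b(x)\| > 0$. This is immediate here because the bundle is trivial ($f_b$ is literally independent of $b$ up to the parameterized rotation, and in any case $S^1$ is compact), so $(\theta,x) \mapsto \|x - f_\theta(x)\|$ is a continuous positive function on the compact space $S^1 \times E_\theta$ and attains a positive minimum. I would state this as the single lemma doing the work, then assemble the conclusion in one line. No equivariant input is needed for this statement — that enters only for the Fuller trace later.
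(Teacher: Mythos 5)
Your proof is correct and takes essentially the same approach as the paper: observe that $f$ has no fixed points (which the paper's \cref{const:example} asserts directly, while you verify it by noting the half-rotation sends each bead and interval to a different one), and conclude the Reidemeister trace vanishes. The paper argues from non-surjectivity of the representing map while you argue it is literally constant for $\epsilon$ small — both are minor variations on the same observation that $v - f(p(v))$ is bounded away from zero.
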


\begin{proof}
Since $f\colon E\to E$ has no fixed points, the $v-f(p(v))$ term in the formula for $R_{S^1}(f)$ is never zero. Then $R_{S^1}(f)$ can be deformed to zero in each fiber since it is never surjective.
\end{proof}

In order to prove that $R_{S^1}(f^2)$ is trivial, we will need the following lemma. Consider the spectrum $\Gamma_{S^1}\Sigma_{+S^1}^\infty S^1$ where both copies of $S^1$ map to the base space $S^1$ via the identity. 

\begin{lem}\label{lem:split section}
The spectrum of sections, $\Gamma_{S^1}\Sigma_{+S^1}^\infty S^1$, is stably equivalent to the cotensor spectrum $F(S^1_+,\Sph)$ and thus, to $\Sph\vee  \Sph^{-1}$. Moreover, the $\Sph$ summand corresponds to sections of $\Sigma_{+S^1}^\infty S^1$ which are constant in each fiber.
\end{lem}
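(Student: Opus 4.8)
The plan is to identify $\Sigma_{+S^1}^\infty S^1$ with the fiberwise sphere spectrum over $S^1$, recognize its spectrum of sections as the cotensor of $\Sph$ by $S^1$, and then split that cotensor using the stable decomposition $S^1_+\simeq S^0\vee S^1$.

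First I would observe that, because the map $S^1\to S^1$ used to build the parameterized spectrum is the identity, the retractive space $X_{+S^1}=S^1\amalg S^1\to S^1$ is simply the fiberwise $0$-sphere $S^1\times S^0$, with basepoint section one of the two copies of $S^1$. Hence $\Sigma_{+S^1}^\infty S^1=\Sigma_{S^1}^\infty(S^1\times S^0)$ is the fiberwise sphere spectrum over $S^1$, i.e.\ the pullback of $\Sph$ along $r\colon S^1\to *$; its total space is a trivial bundle, so no derived correction to $\Gamma_{S^1}$ is needed (cf.\ the remark after \cref{thm:equivariant parameterized PT}). I would then invoke the description of the sections functor from \cite[Section 4.3]{malkiewich_convenient}: for a constant (pulled back) parameterized spectrum $r^*Z$, a section is, level by level and compatibly with the structure maps, just a based map $S^1_+\to Z$, so $\Gamma_{S^1}(r^*Z)\simeq F(S^1_+,Z)$. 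Taking $Z=\Sph$ gives the first assertion, $\Gamma_{S^1}\Sigma_{+S^1}^\infty S^1\simeq F(S^1_+,\Sph)$.

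To finish I would use the split based cofiber sequence $S^0\xrightarrow{\iota}S^1_+\xrightarrow{c}S^1$, in which a choice of basepoint of $S^1$ splits off $S^0$ through the inclusion $\iota$, with $c$ the collapse; thus $S^1_+\simeq S^0\vee S^1$ stably. Applying $F(-,\Sph)$ turns this wedge into a finite product, hence a wedge, of spectra:
\[
F(S^1_+,\Sph)\simeq F(S^0,\Sph)\vee F(S^1,\Sph)\simeq \Sph\vee\Sigma^{-1}\Sph = \Sph\vee\Sph^{-1}.
\]
For the ``moreover'' clause I would chase the $\Sph=F(S^0,\Sph)$ summand back through these equivalences: it is the image of $F(c,\Sph)$, and under the cotensor description $F(c,\Sph)$ is precisely the map sending a point of $\Sph$ to the corresponding section of $r^*\Sph$ obtained by pulling back along $r$, i.e.\ the section that is constant in each fiber (the collapse $c\colon S^1_+\to S^0$ being exactly what encodes ``constant in the $S^1$ direction'' at the space level). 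So the $\Sph$ wedge summand is the subspectrum of fiberwise-constant sections, as claimed.

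I expect the genuine work to be in the identification $\Gamma_{S^1}(r^*\Sph)\simeq F(S^1_+,\Sph)$: one must match the point-set model of the sections functor of \cite{malkiewich_convenient} with the cotensor as an equivalence of spectra, not merely on homotopy groups, and check the relevant cofibrancy/fibrancy hypotheses so that the model-categorical $\Gamma_{S^1}$ computes its derived functor in this case. Once that identification is in hand, the splitting and the identification of the constant-section summand follow formally from $S^1_+\simeq S^0\vee S^1$.
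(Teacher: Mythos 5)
Your argument is essentially the same as the paper's: identify $\Gamma_{S^1}\Sigma_{+S^1}^\infty S^1$ with the cotensor $F(S^1_+,\Sph)$ (the paper does this by the level-by-level identification $S^k\,\overline{\wedge}_{+S^1}\,S^1\simeq S^k\times S^1$, so a section is a based map $S^1_+\to S^k$; you phrase the same thing more abstractly via pullback of $\Sph$ along $S^1\to *$), and then split off $\Sph$ using a chosen basepoint of $S^1$. The paper packages the splitting as the retract $*_+\to S^1_+\to *_+$ and identifies the complement as the fiber of the evaluation map; you use the equivalent split cofiber sequence $S^0\xrightarrow{\iota}S^1_+\xrightarrow{c}S^1$. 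These are the same splitting.

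There is a labeling slip in your ``moreover'' paragraph that produces an incorrect statement as written. You define $c\colon S^1_+\to S^1$ as the cofiber collapse and then claim the $\Sph$ summand is the image of $F(c,\Sph)$. With that $c$, the map $F(c,\Sph)\colon F(S^1,\Sph)\to F(S^1_+,\Sph)$ has image the $\Sph^{-1}$ summand, not the $\Sph$ summand. The $\Sph$ summand is instead the image of $F(r,\Sph)$, where $r\colon S^1_+\to *_+=S^0$ is the retraction of $\iota$, i.e.\ the map collapsing all of $S^1$ to a single point; this is exactly the map you describe in your closing parenthetical, where you re-label it ``$c\colon S^1_+\to S^0$'' in conflict with your earlier $c$. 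Once the retraction is used consistently, the identification of the $\Sph$ summand with fiberwise constant sections is correct and matches the paper's conclusion that the constant-section summand arises from precomposition with the collapse of $S^1$ to a point.
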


\begin{proof}
Spectrum level $k$ of $\Gamma_{S^1}\Sigma_{+S^1}^\infty S^1$ is the space of sections $\Gamma_{S^1}S^k\overline{\wedge}_{+S^1}S^1$. Moreover, $S^k\overline{\wedge}_{+S^1}S^1 \simeq S^k\times S^1$. Therefore, a section of this space over $S^1$ is just a map $S^1_+\to S^k$. This identification commutes with the structure maps of $\Gamma_{S^1} \Sigma_{+S^1}^\infty S^1$ so that it induces an equivalence 
\[
\Gamma_{S^1}\Sigma_{+S^1}^\infty S^1 \simeq F(S^1_+,\Sph).
\]

Now observe that we have a retract of spaces 
\[
*_+ \to S^1_+ \to *_+.
\]
This induces a retract of cotensor spectra
\[
F(*_+,\Sph) \to F(S^1_+,\Sph) \xrightarrow{j} F(*_+,\Sph).
\]
Therefore, 
\[
F(S^1_+,\Sph)\simeq F(*_+,\Sph)\vee Fj
\]
where $Fj$ is the fiber of $j$. We may identify $F_j$ with $F(S^1,\Sph)$. Then identifying $F(*_+,\Sph)\simeq \Sph$ and $F(S^1,\Sph)\simeq \Sph^{-1}$, we obtain
\[
\Gamma_{S^1} \Sigma_{+S^1}^\infty S^1\simeq \Sph\vee \Sph^{-1}.
\]
The last part of the lemma statement follows from the fact that the map $j$ is induced by mapping $S^1$ to a single point. 
\end{proof}

We must now fix notation for various framings of $S^1$. By a framing, we will mean a trivialization of $\nu(S^1,\R^3)$ where we embed $S^1$ as the unit circle in the $(x,y,0)$-plane.
 
\begin{df}\label{df:standard framings on S^1}
The \textbf{standard framing on $S^1$} is given by identifying $\nu(S^1,\R^3)$ with $S^1\times \R^2$ as follows: send the outward pointing unit vector in the radial direction to the first standard basis vector in each fiber, and send the unit vector pointing in the positive $z$-direction to the second standard basis vector in each fiber. We denote $S^1$ equipped with this framing as $S^1_{\std}$. The \textbf{negative standard framing on $S^1$} is constructed similarly except we use the inward pointing unit vector in the radial direction instead of the outward pointing one. We denote $S^1$ equipped with this framing as $S^1_{-\std}$. 
\end{df}

\begin{df}\label{df:Hopf framings}
The \textbf{positive and negative Hopf framings of $S^1$} are given as follows: compose the positive and negative standard framings on $S^1$ with the map $S^1\times\R^2\to S^1\times\R^2$ induced by the map $S^1\to O(2)$ sending $\theta$ to $\begin{bmatrix}
\cos(\theta) & -\sin(\theta)\\
\sin(\theta)& \cos(\theta)
\end{bmatrix}$. We denote these framings by $S^1_{\eta}$ and $S^1_{-\eta}$ respectively. 
\end{df}

The names and corresponding notation in the above definition is chosen to reflect the fact that under the identification $\omega_1(*\to *)\cong\pi_1(\Sph)$ the manifold $S^1$ with the Hopf framing is mapped to a suspension of the Hopf fibration. 

We are now ready to prove the following proposition. 

\begin{prop}\label{prop:fiberwise Lefschetz cobordism class}
Under the isomorphism of \cref{thm:equivariant parameterized PT}, the fiberwise Lefschetz trace
\[
L_{S^1}(f^2) :S^1\times \Sph \to \Sigma_{+S^1}^\infty \Lambda_{S^1}^{f^2} E \to \Sigma_{+S^1}^\infty S^1
\]
corresponds to 
\[
2\cdot S^1_{\std} \amalg 2\cdot S^1_{-\std} \amalg 2\cdot S^1_{(-1)^{n-1}\std} \amalg 2\cdot S^1_{(-1)^n\eta}
\]
in $\omega_0(S^1\xrightarrow{\id} S^1)$.
\end{prop}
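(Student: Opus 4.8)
The plan is to compute $L_{S^1}(f^2)$ fiberwise as a framed cobordism class, via the Pontryagin--Thom isomorphism of \cref{thm:equivariant parameterized PT} (here with $G$ trivial, $V=0$, and $B=S^1$) together with the explicit fixed-point picture of \cref{const:example}. First I would invoke homotopy invariance of the fiberwise Reidemeister trace to replace $f^2$ by the fiberwise homotopic map $g\colon E\to E$ which in the fiber over $\theta$ is $g_\theta=f_\theta^2\circ h$; by construction $g_\theta$ has only isolated fixed points. Since the $\theta$-dependence of $g_\theta$ (the rotation block $A_\theta$ on one $S^n$-bead) fixes every fixed point of $g_{\theta_0}$, the fixed-point set $\Fix(g)\subseteq E=E_\theta\times S^1$ is a disjoint union of circles $\{x\}\times S^1$, each mapped to $B=S^1$ by the degree-one projection. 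The local form of the Reidemeister trace underlying \cref{df:Reidemeister_trace} (near an isolated fixed point $x$ the term $v-g(p(v))$ is, to first order, $\id-D^v_x g$ applied to $v-x$, so the contribution is the point $x$ framed by the Pontryagin--Thom data of $\id-D^v_x g$ on the vertical tangent space, decorated with the constant loop at $x$) identifies $R_{S^1}(g)$, under \cref{thm:equivariant parameterized PT}, with the class represented by $\Fix(g)$, mapped to $\Lambda^{g}_{S^1}E$ by constant loops, with framing over $S^1$ assembled from the vertical derivatives $\id-D^v g$. Post-composing with $\Lambda^{g}_{S^1}E\to S^1$ discards the loop decoration, so $L_{S^1}(f^2)$ is represented in $\omega_0(S^1\xrightarrow{\id}S^1)$ by $\Fix(g)\to S^1$ with this same framing; it remains to enumerate the components and compute their framings.

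Next I would unwind the fiberwise structure of $g$. Because $f_\theta$ rotates the $4$-bead necklace by two positions and applies $A_\theta$ to a single $S^n$-bead, $f_\theta^2$ restricts to the identity on each connecting interval and on each of the two $S^{n-1}$-beads, and to $A_\theta$ on each of the two $S^n$-beads. Hence on each interval $g_\theta$ agrees with the corresponding interval map of \cref{const:example}, on each $S^{n-1}$-bead with the time-one flow $h'$ of the south-pointing field, and on each $S^n$-bead with $A_\theta\circ h'$ where $h'$ is the time-one flow of the north-pointing field. The fixed points are therefore the four interval-interior ones (one per interval, at $t=1/2$, attracting or repelling according to the type of the interval) together with the eight bead poles. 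A local analysis using the derivative computations of \cref{const:example} shows that at four of the eight poles — those at the $t=0$ end of an interval, where the interval map has unit derivative — the local index vanishes, so these fixed points can be cancelled by a fiberwise homotopy, changing neither $R_{S^1}(g)$ nor $L_{S^1}(g)$. This leaves eight component circles of $\Fix(g)$: the four interval-interior ones and the four remaining poles.

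Then I would compute the framing over $S^1$ of each circle, which is the homotopy class of the loop $\theta\mapsto\id-D^v_\theta g$ in $\GL$ of the relevant vertical tangent space. On the six circles not lying on a rotated $S^n$-bead (the four interval-interior ones and the two $S^{n-1}$-pole ones) this loop is constant, so by \cref{df:standard framings on S^1} the framing is $\pm$-standard with sign the local fixed-point index: the two attracting intervals give $S^1_{\std}$, the two repelling intervals give $S^1_{-\std}$, and the two surviving $S^{n-1}$-poles (source poles of a south-pointing flow, together with the interval direction) give $S^1_{(-1)^{n-1}\std}$. On the two circles lying on a rotated $S^n$-bead, the loop $\theta\mapsto\id-D^v_\theta g$ acquires the class of $\theta\mapsto A_\theta|_{TS^n}$, a rotation in a $2$-plane, which generates $\pi_1\mathrm{SO}\cong\Z/2$; by \cref{df:Hopf framings} this is exactly the passage from the standard to the Hopf framing, and tracking the sign (the index of the north-pointing flow at the relevant pole, together with the interval direction) gives $S^1_{(-1)^n\eta}$. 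Assembling these, with the multiplicity two in each term recording the two beads, resp. two intervals, of each type in the necklace, yields exactly $2\cdot S^1_{\std}\amalg 2\cdot S^1_{-\std}\amalg 2\cdot S^1_{(-1)^{n-1}\std}\amalg 2\cdot S^1_{(-1)^n\eta}$ in $\omega_0(S^1\xrightarrow{\id}S^1)$; by \cref{lem:split section} one may also sanity-check that the two Hopf-framed circles are $2$-torsion.

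The main obstacle is the fixed-point bookkeeping on the beads. The relevant fixed points sit at the wedge points of $E_\theta$, which are not manifold points, and the interval component of $h$ has unit derivative there, so one cannot simply quote a hyperbolic local-index formula; instead the local index and local framing at each junction must be extracted by hand from the explicit product-like form of $g$ near that point, the index-zero junctions must be cancelled in the family, and one must verify that the surviving junctions carry precisely the stated framings with the correct signs $(-1)^{n-1}$ and $(-1)^n$. A secondary point to pin down is the identification of the loop $\theta\mapsto A_\theta$ in $\mathrm{SO}(n+1)$ with the Hopf element, so that the framing it produces matches $S^1_{\pm\eta}$ of \cref{df:Hopf framings}, together with keeping all orientations and the degree-one maps to $B$ consistent throughout.
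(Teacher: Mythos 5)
Your overall strategy matches the paper's exactly: replace $f^2$ by a fiberwise homotopic map composed with $h$, identify the fixed-point set as a disjoint union of circles traversing the base once, cancel the degenerate junctions, and read off the framing of each remaining circle from the fiberwise linearization, with the Hopf framing appearing precisely where the $\theta$-dependent twist $A_\theta$ sits. The framing assignments you obtain (two $S^1_{\std}$, two $S^1_{-\std}$, two $S^1_{(-1)^{n-1}\std}$, two $S^1_{(-1)^n\eta}$) agree with the paper's.

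However, your criterion for which four poles get cancelled is not right. You assert they are ``those at the $t=0$ end of an interval, where the interval map has unit derivative.'' Both interval types in \cref{const:example} have derivative $1$ at $t=0$ and derivative $\neq 1$ at $t=1$, and the four $t=0$ endpoints of the intervals are the four north poles. But the north poles of the $S^{n-1}$-beads are \emph{sources} of the southward flow and are precisely the poles the paper \emph{keeps}, contributing $S^1_{(-1)^{n-1}\std}$. The poles that actually cancel are the \emph{sinks} of the sphere flows — the north poles of the $S^n$-beads and the \emph{south} poles of the $S^{n-1}$-beads — which sit at a mix of $t=0$ and $t=1$ ends. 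So ``$t=0$ end'' is not the distinguishing feature; ``sink of the sphere flow'' is. Relatedly, the index at these wedge points is a little delicate precisely because they are not manifold points and the interval derivative there is $1$; the paper avoids computing a local index at all and instead perturbs the map (contract a small neighborhood of each sink pole to the pole, then translate slightly along the adjoining interval) to remove the fixed point outright. Your phrase ``the local index vanishes'' is true but you'd want to justify it either by the paper's perturbation, or by an additivity argument using $\chi(E_\theta)=0$, rather than by pointing to the unit derivative of the interval map alone, since the north poles of the $S^{n-1}$-beads also sit at unit-derivative interval endpoints yet have nonzero index.
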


\begin{proof}
Let $F = h\circ f^2$ where $h$ is as in \cref{const:example}. Since $h$ is fiberwise homotopic to the identity, $L_{S^1}(f^2) = L_{S^1}(F)$. Thus, it suffices to compute the framed manifold associated to $L_{S^1}(F)$. As in \cite[Proposition 3.40]{williams_framed_PT}, the framed manifold associated to $L_{S^1}(F)$ is given by projecting away the $S^1$ coordinate and then taking the preimage of 0 (after making the map smooth and transverse to 0). The framing on this manifold, henceforth referred to as $M$, is given by pulling back the standard basis vectors at the tangent space at 0 through the map $L_{S^1}(F)$. 

Unwinding the definition of $L_{S^1}(F)$, we see that the underlying manifold of $M$ is precisely the fixed points of $F$. These fixed points are depicted in \cref{fig:necklace of spheres deformation}. Moreover, the framing of $M$ is given by the local index of $F$ at each of its fixed points. Thus, the underlying manifold of $M$ is the disjoint union of 12 copies of $S^1$ which traverse the base space exactly once. Note that we will remove some of these copies of $S^1$ by further perturbing the map $h$.

For both copies of $S^1$ appearing fiberwise as the center point of \cref{fig:attracting interval}, we have a copy of $S^1$ with the standard framing.

For both copies of $S^1$ appearing fiberwise as the center point of \cref{fig:repelling interval}, we have a copy of $S^1$ with the negative standard framing.

Now examine the copies of $S^1$ given fiberwise by the north and south poles of \cref{fig:(n-1)-sphere}. Recall that here the map $F$ is given by flowing toward the south pole. Since we need only care about the behavior of $F$ near a fixed point, we can modify $F$ in the neighborhood of a fixed point without changing the Lefschetz trace, and thus also the framed cobordism class of $M$. Choose a small neighborhood of the south pole of $S^{n-1}$, contract it to the south pole, and then translate the south pole along the adjoining interval slightly. This modification removes the fixed points at the south poles of both copies of $S^{n-1}$ so that the corresponding copies of $S^1$ in $M$ vanish.  

Now working locally around the north pole of $S^{n-1}$, we can first contract the adjoining interval to the north pole so that we have a fixed point around which $F$ repels in all $(n-1)$ directions of $S^{n-1}$. Since $F$ is repelling in every direction near the fixed point we may locally model $F$ as a map on $\R^{n-1}$ given by $x\mapsto \lambda x$ with $\lambda>1$. Therefore, the map $x\mapsto x-F(x)$ is homotopic to the antipodal map, the degree of which depends only on the parity of $n-1$. Thus, the fixed point at the north pole in each copy of $S^{n-1}$ gives rise to a copy of $S^1_{(-1)^{(n-1)}\std}$.

Finally, examine the copies of $S^1$ given fiberwise by the north and south poles of \cref{fig:n-sphere}. Begin by examining the north pole (which the map flows toward). Similarly to the previous case, we contract a small neighborhood of the north pole to the north pole and then translate slightly along the adjoining interval. This removes the fixed point at the north pole so that we may disregard both of the corresponding copies of $S^1$ in $\omega_0(S^1\to S^1)$. 

At the south pole of $S^n$ we again work locally. We begin by contracting the adjoining interval to the south pole. Recall that near the south pole in the fiber above $\theta$ the map $F$ spins two of the coordinates of $S^n$ by the angle $\theta$ and then flows away from the south pole in every direction. We can then see that each of the south poles of our two copies of $S^n$ give rise to $S^1_{(-1)^n\eta}$. Note that the framing is actually the Hopf framing stabilized by the identity, which we consider equivalent to the Hopf framing.

Note that in this computation the reference map from each framed manifold to the base space is the identity.
\end{proof}

\begin{prop}\label{prop:trivial lefschetz trace}
The fiberwise Lefschetz trace
\[
L_{S^1}(f^2) :S^1\times \Sph \to \Sigma_{+S^1}^\infty \Lambda_{S^1}^{f^2} E \to \Sigma_{+S^1}^\infty S^1
\]
of the square of the map of \cref{const:example} is trivial.
\end{prop}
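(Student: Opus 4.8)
The plan is to deduce this from \cref{prop:fiberwise Lefschetz cobordism class}, \cref{lem:split section}, and the Pontryagin--Thom isomorphism \cref{thm:equivariant parameterized PT}. By \cref{prop:fiberwise Lefschetz cobordism class}, the trace $L_{S^1}(f^2)$ corresponds under \cref{thm:equivariant parameterized PT} to the class
\[
2\cdot S^1_{\std} \amalg 2\cdot S^1_{-\std} \amalg 2\cdot S^1_{(-1)^{n-1}\std} \amalg 2\cdot S^1_{(-1)^n\eta}
\in \omega_0(S^1\xrightarrow{\id} S^1),
\]
so it suffices to show this class vanishes. Combining \cref{lem:split section} with \cref{thm:equivariant parameterized PT} gives
\[
\omega_0(S^1\xrightarrow{\id} S^1)\;\cong\;\pi_0\!\left(\Gamma_{S^1}\Sigma_{+S^1}^\infty S^1\right)\;\cong\;\pi_0(\Sph)\oplus\pi_0(\Sph^{-1})\;\cong\;\Z\oplus\Z/2,
\]
and \cref{lem:split section} identifies the $\Z=\pi_0(\Sph)$ summand with the constant sections, so that the projection onto it is induced by restricting a section to a single fiber.

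I would then check the two coordinates separately. For the $\Z$-coordinate, restricting the framed manifold of \cref{prop:fiberwise Lefschetz cobordism class} to the fiber over a point $\theta\in S^1$ produces a $0$-manifold consisting of two points of index $+1$, two of index $-1$, two of index $(-1)^{n-1}$, and two of index $(-1)^{n}$ (equivalently, this restriction is just the ordinary Lefschetz trace $L(f_\theta^2)$). Its total is $2+(-2)+2(-1)^{n-1}+2(-1)^{n}=0$, using $(-1)^{n}=-(-1)^{n-1}$, so the $\Z$-coordinate vanishes. For the $\Z/2$-coordinate, every constituent of the class occurs with coefficient $2$, and the projection to the $\Z/2=\pi_0(\Sph^{-1})$ summand is a group homomorphism; since $\Z/2$ is annihilated by multiplication by $2$, the image of the class in that summand is automatically zero. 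Hence the class is $0$ in $\omega_0(S^1\xrightarrow{\id} S^1)$, and by \cref{thm:equivariant parameterized PT} and \cref{prop:fiberwise Lefschetz cobordism class} the fiberwise Lefschetz trace $L_{S^1}(f^2)$ is trivial.

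The one step I would be most careful about is matching the wedge splitting of \cref{lem:split section} with the cobordism-theoretic "restrict to a fiber" operation, so that the $\Z$- and $\Z/2$-coordinates of the class really are the quantities computed above; granting that dictionary the argument is just the numerology. Structurally, the point is that the choice of \emph{four} spheres in \cref{const:example} --- two in each dimension --- forces every fixed point of $f^2$ to contribute with multiplicity $2$, which is exactly what kills the potentially nontrivial $\eta$-type class in $\pi_1\Sph=\Z/2$ and makes the fiberwise Lefschetz trace vanish even though (as the later computation of the fiberwise Fuller trace will show) the family is genuinely nontrivial.
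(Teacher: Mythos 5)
Your argument is correct, but it takes a genuinely different route from the paper's. The paper proves the class is null-cobordant by constructing explicit cobordisms in $\omega_0(S^1 \xrightarrow{\id} S^1)$: it pairs each $S^1_{\std}$ with an $S^1_{-\std}$ via a horseshoe (bent cylinder) cobordism over $S^1$, and then uses a pair-of-pants cobordism to show $2\cdot S^1_{(-1)^n\eta}$ is cobordant to $2\cdot S^1_{(-1)^n\std}$, which cancels against $2\cdot S^1_{(-1)^{n-1}\std}$ since $(-1)^{n-1}=-(-1)^n$. In particular the paper does \emph{not} invoke \cref{lem:split section} in this proof; that lemma is reserved for the Fuller trace computation later. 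Your approach instead pulls in \cref{lem:split section} to identify $\omega_0(S^1\to S^1)\cong\Z\oplus\Z/2$ and computes the two coordinates separately: the $\Z$-coordinate is the fiberwise Lefschetz number $2-2+2(-1)^{n-1}+2(-1)^n=0$, and the $\Z/2$-coordinate dies because the entire class is divisible by $2$. The two arguments have complementary virtues. The paper's is more geometric and self-contained, and it also implicitly records \emph{which} cobordisms realize the cancellation (which is in the spirit of the later Fuller-trace analysis, where the absence of such a cancellation is exactly the point). Your argument is more structural, and the observation that everything appearing with multiplicity $2$ must die in the $\pi_0(\Sph^{-1})=\Z/2$ summand cleanly isolates the reason the Lefschetz-level obstruction vanishes; this foreshadows the contrast with \cref{prop:nontrivial fuller trace}, where the equivariant version of the same summand is $\pi_0^{C_2}(\Sph^{-1})$ and the $C_2\times S^1_\eta$ term is no longer a multiple of $2$.

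One point you rightly flagged: you should justify that the projection onto the $\Z=\pi_0(\Sph)$ summand really is ``restrict to a fiber.'' This follows from the proof of \cref{lem:split section}: the split surjection $j\colon F(S^1_+,\Sph)\to F(*_+,\Sph)$ is precomposition with the basepoint inclusion $*_+\to S^1_+$, and under the Pontryagin--Thom correspondence this corresponds to intersecting a framed $1$-manifold over $S^1$ with a single fiber, i.e.\ to the ordinary Lefschetz trace $L(f_\theta^2)$. With that dictionary in place your computation is complete.
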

\begin{proof}
By \cref{thm:equivariant parameterized PT} and \cref{prop:fiberwise Lefschetz cobordism class}, it suffices to show that 
\[
2\cdot S^1_{\std} \amalg 2\cdot S^1_{-\std} \amalg 2\cdot S^1_{(-1)^{n-1}\std} \amalg 2\cdot S^1_{(-1)^n\eta}
\]
is null-cobordant in $\omega_0(S^1\to S^1)$.

Each copy of $S^1$ with the standard framing is inverse to a copy of $S^1$ with the negative standard framing. The cobordism is given by taking $S^1\times I$ and bending it into a horseshoe shape. The reference map $S^1\times I\to S^1$ is given by projecting to $S^1$. 

We now claim that $2\cdot S^1_{(-1)^n\eta}$ is cobordant over $S^1$ to $2\cdot S^1_{(-1)^n\std}$. The pants cobordism gives an equivalence over $S^1$ from $2\cdot S^1_{(-1)^n\eta}$ to $S^1_{(-1)^n\std}$ which is then cobordant to $\cdot S^1_{(-1)^n\std}$ via the upside down pants. 

Therefore, 
\[
2\cdot S^1_{(-1)^{n-1}\std} \amalg 2\cdot S^1_{(-1)^n\eta} \simeq  2\cdot S^1_{(-1)^{n-1}\std} \amalg 2\cdot S^1_{(-1)^{n}\std}\simeq \emptyset.
\]

Therefore, $L_{S^1}(F)$ is null homotopic as desired. 
\end{proof}

\begin{lem}
The composition
\[
S^1\times \Sph \xrightarrow{R_{S^1}(f^2)} \Sigma_{+S^1}^\infty \Lambda_{S^1}^{f^2}E \to \Sigma_{+S^1}^\infty L_{S^1} T^2
\]
is trivial. Here the second map in the composition is induced by collapsing $E_\theta\to S^1$ by collapsing each of the high-dimensional spheres to a point. 
\end{lem}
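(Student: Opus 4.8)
The plan is to compute the framed cobordism class of the composite $R_{S^1}(f^2)$ followed by the collapse map, using the same machinery deployed for the fiberwise Lefschetz trace. First I would observe that collapsing each high-dimensional sphere $S^n$ and $S^{n-1}$ in $E_\theta$ to a point yields a space $L_\theta T^2$ which deformation retracts onto the ``necklace'' of four intervals — in fact onto a circle (since the four intervals joined north-pole-to-south-pole in a cycle form a loop), so that $L_{S^1}T^2 \cong S^1 \times S^1 = T^2$ with the first factor being the base. Then $\Sigma_{+S^1}^\infty L_{S^1} T^2$ is equivalent over $S^1$ to $\Sigma_{+S^1}^\infty (S^1 \times S^1)$, and by the same argument as in \cref{lem:split section} (or directly), the composite is detected by a framed cobordism class in $\omega_0(S^1 \times S^1 \xrightarrow{\pr_1} S^1)$, or more simply its image in $\omega_0(S^1 \xrightarrow{\id} S^1)$ together with the data of a reference map to the interior $S^1$ factor.

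**Next I would** use \cite[Proposition 3.40]{williams_framed_PT} exactly as in \cref{prop:fiberwise Lefschetz cobordism class}: replace $f^2$ by $F = h \circ f^2$, which is fiberwise homotopic, and identify the underlying framed manifold of the composite with the fixed-point set of $F$ — the twelve (respectively, after perturbation, six) circles found in \cref{prop:fiberwise Lefschetz cobordism class} — now equipped with their reference maps into $L_{S^1}T^2$ rather than being collapsed to $S^1$. Under the collapse $E_\theta \to L_\theta T^2$, each fixed point of $F$ lying on one of the interval components maps to a point of the necklace-of-intervals, hence to a point of the interior circle; the fixed points lying at the poles of the high-dimensional spheres all map to the two or four ``beads'' where intervals meet, i.e.\ to finitely many points of that circle. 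So the framed cobordism class in question is the same disjoint union of framed circles as in \cref{prop:fiberwise Lefschetz cobordism class}, namely
\[
2\cdot S^1_{\std} \amalg 2\cdot S^1_{-\std} \amalg 2\cdot S^1_{(-1)^{n-1}\std} \amalg 2\cdot S^1_{(-1)^n\eta},
\]
but now with a specified reference map to the interior $S^1$ of $L_{S^1}T^2$.

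**Then I would** run the null-cobordism from \cref{prop:trivial lefschetz trace}, checking that every cobordism there can be arranged to carry the requisite reference map to the interior circle. The two ``horseshoe'' cobordisms cancelling $S^1_{\std}$ against $S^1_{-\std}$ are built from $S^1 \times I$ bent over, and since the fixed points being cancelled sit on adjacent interval components of the necklace (the attracting and repelling intervals), the horseshoe can be taken to lie over a single arc of the interior circle, so the reference map extends. Likewise the pair-of-pants cobordism identifying $2\cdot S^1_{(-1)^n\eta}$ with $2\cdot S^1_{(-1)^n\std}$, and the subsequent cancellation of the $(-1)^{n-1}\std$ and $(-1)^n\std$ terms, take place over a neighborhood of a single bead of the necklace and hence admit a constant (or short) reference map to the interior circle. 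Since all of these cobordisms assemble to exhibit the total class as null-cobordant over $S^1 \times S^1 \to S^1$, the composite is trivial.

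**The main obstacle** I anticipate is verifying that the reference maps to the interior $S^1$ factor genuinely extend over the cobordisms — in \cref{prop:trivial lefschetz trace} the reference map was just the identity to the base, which is automatically preserved, whereas here one must track where the cancelling fixed points go in $L_{S^1}T^2$ and confirm the cobordisms can be chosen compatibly. This is really a matter of observing that each cancellation is \emph{local} on the necklace — it involves fixed points on a single interval or at a single bead — so the reference data is constrained to a contractible (or at worst a short) arc of the interior circle and presents no obstruction; but it requires care to state cleanly, and is the step where the argument genuinely uses the explicit geometry of \cref{const:example} rather than formal properties.
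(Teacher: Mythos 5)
Your overall strategy matches the paper's: replace $f^2$ by $F = h\circ f^2$, identify the framed manifold with the same disjoint union of circles as in \cref{prop:fiberwise Lefschetz cobordism class}, track the reference maps under the collapse, and then re-run the null-cobordism from \cref{prop:trivial lefschetz trace}. But two points need care.

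First, a notational confusion: you assert that collapsing the spheres in $E_\theta$ ``yields a space $L_\theta T^2$'' and later write $L_{S^1}T^2 \cong S^1\times S^1 = T^2$. That is not correct --- the collapse of $E_\theta$ is a circle, the collapse of $E$ is $T^2$, and $L_{S^1}T^2$ is the \emph{fiberwise free loop space} of $T^2$ over $S^1$, which is $S^1\times LS^1$, not the torus. What saves your computation is that the reference maps of the fixed-point circles land in the constant-loop component of $L_{S^1}T^2$, and that component is $T^2$; but the identification as stated is false and should not be made.

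Second, your ``locality'' argument for extending the cobordisms over the reference maps has a gap. You claim that the cancellation of the $(-1)^{n-1}\std$ terms against the $(-1)^n\std$ terms (after the pants converts $\eta$ to $\std$) ``takes place over a neighborhood of a single bead.'' But the $(-1)^{n-1}\std$ circles come from the north poles of the two $S^{n-1}$ beads while the $(-1)^n\eta$ circles come from the south poles of the two $S^n$ beads --- four different beads, not one. So the cobordism genuinely moves the reference point around the necklace circle, and locality fails. The paper instead makes the simpler observation: every reference map $S^1\to L_{S^1}T^2$ is a fiberwise constant loop that traverses the base once and the fiber zero times, and any two such are fiberwise homotopic (since the fiber circle of $T^2$ is path-connected). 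Once a homotopy is fixed, the null-cobordism of \cref{prop:trivial lefschetz trace} lifts verbatim to a null-cobordism in $\omega_0(L_{S^1}T^2\to S^1)$. You should replace the locality argument with this observation; the conclusion then follows, and the rest of your write-up stands.
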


\begin{proof}
Along the above composition, $f^2$ becomes the identity. Therefore, $\Lambda_{S^1}^{f^2}E$ becomes $L_{S^1}T^2$, the fiberwise free loop space on $T^2$ over $S^1$. 

Recall that in \cref{prop:trivial lefschetz trace} we showed the composition
\[
S^1\times \Sph \to \Sigma_{+S^1}^\infty \Lambda_{S^1}^{f^2}E \to \Sigma_{+S^1}^\infty L_{S^1} T^2 \to \Sigma^\infty_{+S^1} S^1
\]
is trivial by computing the cobordism class of the corresponding framed manifold over $S^1$. This framed manifold was a disjoint union of copies of $S^1$ each of which admitted a reference map to $\Lambda_{S^1}^{f^2} E$ as the inclusion of the fixed points of $F$. When we collapsed $E$ to $S^1$ by collapsing $E_\theta$ to $*$, the reference maps all became the identity map on $S^1$. However, we now collapse $E$ to $T^2$ and along this map the reference maps include each copy of $S^1$ into $T^2$ so that each copy of $S^1$ traverses the base once and the fiber zero times. The components of the reference map $\amalg_8 S^1 \to L_{S^1}T^2$ are then given by identifying the images of $S^1$ in $T^2$ with fiberwise constant free loops. Then each of the components of $\amalg_8 S^1\to L_{S^1} T^2$ are fiberwise homotopic. By fixing a homotopy between our reference maps, the calculation of \cref{prop:trivial lefschetz trace} shows that $(\amalg_8 S^1 \to L_{S^1}T^2)=0$ in $\omega_0(L_{S^1}T^2\to S^1)$. Thus, applying \cref{thm:equivariant parameterized PT} proves the desired claim. 
\end{proof}

\begin{prop}\label{prop:trivial f^2}
The fiberwise Reidemeister trace
\[
R_{S^1}(f^2) :S^1\times \Sph \to \Sigma_{+S^1}^\infty \Lambda_{S^1}^{f^2} E
\]
of the square of the map of \cref{const:example} is trivial.
\end{prop}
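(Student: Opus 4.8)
The plan is to realize $R_{S^1}(f^2)$ as a framed cobordism class via \cref{thm:equivariant parameterized PT} (taking $G$ trivial) and then kill it by pushing the null-cobordism of \cref{prop:trivial lefschetz trace} forward along a single reference map. First I would replace $f^2$ by $F=h\circ f^2$ from \cref{const:example}; since $h$ is fiberwise homotopic to the identity, $R_{S^1}(f^2)=R_{S^1}(F)$ after transporting along the fiber homotopy equivalence $\Lambda_{S^1}^{F}E\simeq\Lambda_{S^1}^{f^2}E$. Unwinding the definition of the Reidemeister trace exactly as in the proof of \cref{prop:fiberwise Lefschetz cobordism class}, but now also tracking the twisting paths $\gamma_{f(p(v)),v}$, identifies $R_{S^1}(f^2)$ under \cref{thm:equivariant parameterized PT} with the eight framed circles
\[
2\cdot S^1_{\std}\amalg 2\cdot S^1_{-\std}\amalg 2\cdot S^1_{(-1)^{n-1}\std}\amalg 2\cdot S^1_{(-1)^n\eta}
\]
of \cref{prop:fiberwise Lefschetz cobordism class}, equipped with reference maps to $\Lambda_{S^1}^{f^2}E$ sending each fixed point of $F$ to the constant twisted loop at it; composing these with $\Lambda_{S^1}^{f^2}E\to S^1$ recovers the Lefschetz picture. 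So the task is to promote the null-cobordism of \cref{prop:trivial lefschetz trace} to one compatible with the reference maps into $\Lambda_{S^1}^{f^2}E$.

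The crucial observation is that all eight fixed points lie on a single circle $C\subseteq E_\theta$ which is fixed pointwise by $f^2_\theta$ for every $\theta$ --- the ``core'' of the necklace, built from the four connecting intervals together with, in each bead, a half great circle joining its south pole to its north pole chosen to lie in the plane spanned by the first coordinate and a coordinate not spun by $f$. On each interval and each $S^{n-1}$ the map $f^2_\theta$ is the identity, and on each $S^n$ it is a rotation in the two spun coordinates, whose fixed locus is exactly where those coordinates vanish --- a locus containing both poles and the chosen half great circle. Since $E\to S^1$ is trivial and $C$ does not depend on $\theta$, for any two of the eight fixed points $x_i,x_j$ one picks a path $\delta$ from $x_i$ to $x_j$ inside $C$ and builds the standard explicit homotopy through twisted loops from the constant loop at $x_i$, to the ``out-and-back'' loop $\bar\delta\cdot\delta$ based at $x_j$, and then to the constant loop at $x_j$; because $f^2_\theta|_C=\id$ this homotopy involves no $\theta$-dependent choices and hence is a homotopy of sections over $S^1$. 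Consequently all eight reference maps $S^1\to\Lambda_{S^1}^{f^2}E$ are homotopic over $S^1$ to one common section $\sigma$.

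The argument then finishes formally: post-composition with $\sigma$ induces a homomorphism $\sigma_*\colon\omega_0(S^1\xrightarrow{\id}S^1)\to\omega_0(\Lambda_{S^1}^{f^2}E\to S^1)$, and the class representing $R_{S^1}(f^2)$ is $\sigma_*$ applied to $2\cdot S^1_{\std}\amalg 2\cdot S^1_{-\std}\amalg 2\cdot S^1_{(-1)^{n-1}\std}\amalg 2\cdot S^1_{(-1)^n\eta}$, which is null-cobordant in $\omega_0(S^1\xrightarrow{\id}S^1)$ by \cref{prop:trivial lefschetz trace}; therefore $R_{S^1}(f^2)=\sigma_*(0)=0$ by \cref{thm:equivariant parameterized PT}. (Alternatively, once the eight reference maps are known to be homotopic over $S^1$, one can conclude from the preceding lemma's vanishing in $\omega_0(L_{S^1}T^2\to S^1)$, since the relevant components of $\Lambda_{S^1}^{f^2}E$ and of $L_{S^1}T^2$ receive these reference maps compatibly.) I expect the middle paragraph to be the main obstacle: verifying that the eight fixed-point circles really do carry homotopic reference maps into $\Lambda_{S^1}^{f^2}E$, which hinges on locating them all on the rigid core circle $C$ and carrying out the twisted-loop homotopy uniformly in $\theta$. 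Once that is established, the reduction to \cref{prop:trivial lefschetz trace} is purely formal.
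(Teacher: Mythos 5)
Your proof is correct, and it takes a genuinely different route from the paper's.  The paper's proof collapses $E$ to $T^2$ via the map $c$ crushing the high-dimensional spheres, establishes in the unnamed lemma that the reference maps $\amalg_8 S^1\to L_{S^1}T^2$ are all fiberwise homotopic to a common constant-loop section so that the class vanishes in $\omega_0(L_{S^1}T^2\to S^1)$, and then lifts this vanishing back to $\omega_0(\Lambda_{S^1}^{f^2}E\to S^1)$ via a connectivity argument: $c$ is $(n-1)$-connected, a fiber-sequence comparison shows $\Lambda_{S^1}^{f^2}E\to L_{S^1}T^2$ is $(n-2)$-connected, and $0<n-2$ forces $\omega_0(\Lambda_{S^1}^{f^2}E\to S^1)\cong\omega_0(L_{S^1}T^2\to S^1)$.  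You instead exhibit the ``core circle'' $C\subset E_\theta$ --- the union of the four connecting intervals with half great circles through each bead, the one in the spun $S^n$ chosen to lie in the fixed locus $\{x_2=x_3=0\}\cong S^{n-2}$ (which exists and joins the poles precisely because $n\geq 3$) --- and use $f_\theta^2|_C=\id$, together with $\theta$-independence of $C$, to homotope all eight constant-loop sections directly in $\Lambda_{S^1}^{f^2}E$ to a single section $\sigma$, so that the class is $\sigma_*$ applied to the null element of \cref{prop:trivial lefschetz trace}.  This bypasses both the $T^2$ detour and the connectivity computation, at the cost of needing the explicit geometric observation about $C$; the paper's route is geometrically blunter but more robust, since after collapsing to $T^2$ the ``all reference maps agree up to fiberwise homotopy'' step is visibly trivial (all points of a circle fiber are joined by paths), and the connectivity argument is a standard maneuver.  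It is worth noting that the core-circle picture is implicitly what makes the paper's collapse argument work as well --- under $c$ the circle $C$ maps isomorphically to the fiber $S^1$ of $T^2$ --- so your argument can be seen as unwinding the paper's to avoid the auxiliary space.
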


\begin{proof} 
First fix basepoints $e_0\in E_\theta$ and $x_0\in S^1$ so that the map $h_\theta:E_\theta\to S^1$ which collapses each high dimensional sphere to a point is based. Now observe that this map is $(n-1)$-connected since we assumed that $n\geq 3$. Thus, the fiberwise map $c:E\to T^2$ is also $(n-1)$-connected since $E$ and $T^2$ are both trivial $S^1$ bundles. Along $c_\theta$ the map $f^2$ is the identity. We therefore have a map of fiber sequences,
\begin{figure}[H]
\center
\begin{tikzcd}
\Omega_{S^1,(e_0,f^2(e_0))}E \arrow[r]\arrow[d,"g"]& \Lambda_{S^1}^{f^2} E \arrow[d]\arrow[r,"ev_0"]& E\arrow[d,"c"]\\
\Omega_{S^1,x_0}T^2\arrow[r] & L_{S^1} T^2 \arrow[r,"ev_0"] & T^2
\end{tikzcd}
\end{figure}  
where $\Omega_{S^1,(e_0,f^2(e_0))}E$ is the space of fiberwise paths from $e_0$ to $f^2(e_0)$ in $E$. We now claim that the map $g$ in the above diagram is $(n-2)$-connected. Since $E_\theta$ and $S^1$ are both path connected, we can find a path $\gamma_{e_0,f^2(e_0)}$ from $e_0$ to $f^2(e_0)$ in $E_\theta$. We then have a map from the fiberwise based loop space $\Omega_{S^1,e_0} E$ to $\Omega_{S^1,(e_0,f^2(e_0))}E$ given by concatenating a fiberwise based loop with $\gamma_{e_0,f^2(e_0)}$. This map is a weak equivalence by contracting a neighborhood of $\gamma_{e_0,f^2(e_0)}$ to $e_0$. We then have the following commutative diagram:
\begin{figure}[H]
\center
\begin{tikzcd}
\Omega_{S^1,e_0}E \arrow[r,"\simeq"]\arrow[dr,"\Omega_{S^1}c"']& \Omega_{S^1,(e_0,f^2(e_0))}E\arrow[d,"g"]\\
{}&\Omega_{S^1,x_0} T^2.
\end{tikzcd}
\end{figure}
Since $c$ is $(n-1)$-connected, $g$ must be $(n-2)$-connected. Therefore, the above diagram of fiber sequences shows that $\Lambda_{S^1}^{f^2} E\to L_{S^1}T^2$ is $(n-2)$-connected. 

We may therefore identify 
\[
\omega_{0}(\Lambda_{S^1}^{f^2}E\to S^1) \cong \omega_{0}(L_{S^1} T^2\to S^1)
\] 
as $0<n-2$. Since the framed manifold associated to $R_{S^1}(f^2)$ in $\omega_{0}(L_{S^1}T^2\to S^1)$ is null-cobordant, we may therefore conclude that the framed manifold associated to $R_{S^1}(f^2)\in \omega_0(\Lambda_{S^1}^{f^2}E)$ is also null-cobordant. Then again applying \cref{thm:equivariant parameterized PT}, the claim follows. 
\end{proof}

\subsection{The fiberwise Fuller trace}

In this section we will show that the fiberwise Fuller trace, $R_{S^1,C_2}(\Psi^2 f)$, of \cref{const:example} is nontrivial. We begin with a cobordism computation. 

\begin{prop}\label{prop:cobordism class of fiberwise Fuller trace}
Under the isomorphism of \cref{thm:equivariant parameterized PT} the composition
\[
S^1\times \Sph \xrightarrow{R_{S^1,C_2}(\Psi_{S^1}^{2}f)} \Sigma_{+S^1}^\infty \Lambda_{S^1}^{\Psi_{S^1}^{2}f}(E\times_{S^1}E) \to \Sigma_{+S^1}^\infty S^1
\]
of the fiberwise Fuller trace of $f$ with the map induced by $E\times_{S^1} E\to S^1$ corresponds to
\[
C_2\times S^1_{-\std}\amalg C_2\times S^1_{\std}\amalg C_2\times S^1_{(-1)^{n}\eta}\amalg C_2\times S^1_{(-1)^{n-1}\std}
\]
in $\omega_{0}^{C_2}(S^1\to S^1)$. Here the subscripts on $S^1$ denote the fact that $S^1$ is equipped with framings described in \cref{df:standard framings on S^1} and \cref{df:Hopf framings}.
\end{prop}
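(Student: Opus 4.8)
The plan is to carry out the cobordism computation of \cref{prop:fiberwise Lefschetz cobordism class} one more time, now $C_2$-equivariantly. The first step is to identify the fiberwise fixed points of the fiberwise Fuller construction. In the fiber over $\theta \in S^1$, the fixed points of $\Psi^2_{S^1}f$ are the pairs $(x, f_\theta(x))$ with $f_\theta^2(x) = x$, and $x \mapsto (x, f_\theta(x))$ identifies this set with $\Fix(f_\theta^2)$ compatibly with the $C_2$-action, where the generator of $C_2$ acts on $\Fix(f_\theta^2)$ by $x \mapsto f_\theta(x)$; since $f$ has no fixed points this action is free. To make the fixed points isolated I would pass along a fiberwise $C_2$-homotopy to a map $\widetilde\Psi$ with fiberwise isolated fixed points --- such a $\widetilde\Psi$ exists because the map $h \simeq \id_E$ of \cref{const:example} isolates the fixed points of $f_\theta^2$, and the free $C_2$-action allows one to spread the corresponding modification symmetrically over $E \times_{S^1} E$ --- and I would arrange that the resulting fixed point set is, under the identification above, the disjoint union of circles over $S^1$ appearing in the proof of \cref{prop:fiberwise Lefschetz cobordism class}.

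The second step is to read off the framed $C_2$-cobordism class using \cref{thm:equivariant parameterized PT} and the fixed-point-index description of the Reidemeister trace used in \cref{prop:fiberwise Lefschetz cobordism class}: the composite in the statement corresponds to the closed $C_2$-manifold $\Fix(\widetilde\Psi)$, fibered over $S^1$ by projection and framed fiberwise by the local fixed point indices of $\widetilde\Psi$. The one new ingredient is the computation of these indices, for which I would use that the derivative of $\Psi^2(g)$ at a fixed point $(x,g(x))$ is the block matrix $\left(\begin{smallmatrix} 0 & Dg_{g(x)} \\ Dg_x & 0 \end{smallmatrix}\right)$, so that $\det\!\bigl(I - D\Psi^2(g)_{(x,g(x))}\bigr) = \det\!\bigl(I - D(g^2)_x\bigr)$ and, up to stabilization and conjugation, the corresponding framing data agree. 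Taking $g$ to be the perturbation of $f$ used above, the local framing of $\widetilde\Psi$ at each fixed point therefore equals the local framing of $F = h \circ f^2$ at the corresponding fixed point, as computed in \cref{prop:fiberwise Lefschetz cobordism class}. After carrying out the cancelling perturbations of that proposition --- deleting the circles over the south poles of the $(n-1)$-spheres and the north poles of the $n$-spheres, which can be done one $C_2$-orbit at a time --- the underlying non-equivariant framed manifold is $2\cdot S^1_{\std} \amalg 2\cdot S^1_{-\std} \amalg 2\cdot S^1_{(-1)^{n-1}\std} \amalg 2\cdot S^1_{(-1)^n\eta}$, framed as in \cref{df:standard framings on S^1} and \cref{df:Hopf framings}.

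The third step is to record the $C_2$-action on these eight circles. Its generator acts by $x \mapsto f_\theta(x)$, which rotates the necklace halfway around --- shifting each of the four spheres and each of the four joining intervals by two positions --- and hence interchanges the two attracting-interval circles, the two repelling-interval circles, the two circles over the $(n-1)$-spheres, and the two circles over the $n$-spheres. (One checks along the way that the attracting/repelling labelling of the intervals is invariant under this shift, so that no framing types get mixed, and that the two members of each pair lie over disjoint parts of $E \times_{S^1} E$.) Thus the eight circles break into four free $C_2$-orbits, one of each framing type, and the $C_2$-framing of an orbit is simply $C_2$ times the framing --- computed in the second step --- of either member. Assembling, the composite corresponds to $C_2 \times S^1_{-\std} \amalg C_2 \times S^1_{\std} \amalg C_2 \times S^1_{(-1)^n\eta} \amalg C_2 \times S^1_{(-1)^{n-1}\std}$ in $\omega_0^{C_2}(S^1 \xrightarrow{\id} S^1)$, which is the assertion.

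The step I expect to be the main obstacle is the first one: producing an explicit $C_2$-equivariant perturbation of $\Psi^2_{S^1}f$ whose fiberwise fixed point circles and local framings are exactly those of \cref{prop:fiberwise Lefschetz cobordism class}, and checking that the cancelling perturbations of that proposition respect the free $C_2$-action. Applying $h$ in a single coordinate is not equivariant, while the naive equivariant choice $\Psi^2_{S^1}(h \circ f)$ changes some of the local indices at the interval centers (though only within the same framed cobordism class), so some care is needed to land on the clean representative in the statement. Once a suitable perturbation and the free orbit structure are in hand, the conclusion is an immediate application of \cref{thm:equivariant parameterized PT}.
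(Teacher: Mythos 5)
Your strategy is the right one, and it matches the paper's proof in structure: pass to a $C_2$-equivariant perturbation of $\Psi^2_{S^1}f$ with isolated fixed points, identify the fixed point circles, compute the local framing on each, and use freeness of the $C_2$-action to reduce to one circle per orbit. Your block-matrix observation, that $D\Psi^2(g)_{(x,g(x))} = \left(\begin{smallmatrix} 0 & Dg_{g(x)} \\ Dg_x & 0 \end{smallmatrix}\right)$ so that $\det\bigl(I - D\Psi^2(g)\bigr) = \det\bigl(I - D(g^2)_x\bigr)$, is correct and is implicitly what drives the paper's determinant computations. So the approach is essentially the same.

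The ``obstacle'' you single out is smaller than you fear, and the resolution is exactly what you call the naive choice. The paper takes $F = h\circ f$ and works directly with $\Psi^2_{S^1}F$, computing the local index at each orbit of fixed points $(p_i, p_j)$ by writing $D\Psi^2_{S^1}F$ as the block matrix $\left(\begin{smallmatrix} 0 & \lambda I \\ \lambda (a_{ij}) & 0 \end{smallmatrix}\right)$ (or its variants) and taking $\det(I - M)$. You worried that $(hf)^2 \neq hf^2$ ``changes some of the local indices at the interval centers,'' and indeed the eigenvalues change (e.g.\ $\lambda^2$ in place of $\lambda$ at an interval center), but since $\lambda>1 \Leftrightarrow \lambda^2>1$ the \emph{signs} of $\det(I - D\Psi^2_{S^1}F)$ are unchanged, and since $h$ is locally constant in $\theta$ away from the $S^n$ factors, the framing class (standard vs.\ negative standard vs.\ $\pm\eta$) is unchanged as well. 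So rather than engineering a perturbation whose fixed point data literally reproduce those of \cref{prop:fiberwise Lefschetz cobordism class}, you should just commit to $\Psi^2_{S^1}(h\circ f)$ and run the determinant computation at each of the six orbits (two of which get perturbed away exactly as in \cref{prop:fiberwise Lefschetz cobordism class}); your block-matrix reduction lets you do this one fixed point at a time. In the paper this is the bulk of the proof, so it cannot really be left as a cited consequence of \cref{prop:fiberwise Lefschetz cobordism class}, but the computation is short once the block structure is in hand.

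One small point to be careful about when you write it up: after stabilizing, $I - \left(\begin{smallmatrix} 0 & A \\ B & 0 \end{smallmatrix}\right)$ is homotopic through invertible matrices (via block elementary row/column operations, each a contractible family) to $\left(\begin{smallmatrix} I & 0 \\ 0 & I - BA \end{smallmatrix}\right)$; you then need $I - BA$ and $I - AB$ to represent the same class of loops, which holds because conjugation by the loop $A(\theta)$ induces the identity on $\pi_1(GL)$. That justifies ``up to stabilization and conjugation the framing data agree,'' and with it, all of the framing identifications in your second step.
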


\begin{proof}
Again let $F=h\circ f$ where $h$ and $f$ are as in \cref{const:example}. Since $F$ and $f$ are fiberwise homotopic, it suffices to compute the framed $C_2$-cobordism class associated to the above composition but with $f$ replaced with $F$.
 
In \cite{mp2} Malkiewich and Ponto show that the fiberwise Fuller trace of $F$ is a map 
\[
S^1\times \Sph \to \Sigma_{+S^1}^\infty \Lambda_{S^1}^{\Psi_{S^1}^{2}F}E\times_{S^1}E
\]
given by 
\begin{align*}
S^1\times S^{2N} & \to S^N\barsmash S^N\barsmash \left( \Lambda_{S^1}^{\Psi_{S^1}^{2}F}E\times_{S^1}E \coprod S^1  \right)\\
(\theta,v_1,v_2) & \mapsto (v_1-F_\theta (p_\theta (v_2))\wedge v_2-F_\theta (p_\theta (v_1))\wedge(\gamma_{F_\theta( p_\theta (v_2)), v_1},\gamma_{F_\theta (p_\theta (v_1)),v_2}))
\end{align*}
where $p_\theta$ is the projection of the normal bundle to the fiber at $\theta$ and $\gamma_{F_\theta (p_\theta( v_i)), v_j}$ is a path in $E$ from $F_\theta( p_\theta( v_i))$ to $v_j$.

Using \cref{thm:equivariant parameterized PT}, identify the above composition with an element of $\omega_0^{C_2}(S^1\to S^1)$ which we denote as $M$. The proof of \cref{thm:equivariant parameterized PT} (which appears as \cite[Theorem 3.60]{williams_framed_PT}) shows that the underlying manifold of $M$ is given by the disjoint union of the fixed points and 2-periodic points of $F$. Since $F$ has no fixed points, $M$ is precisely the 2-periodic points of $F$ where the $C_2$ action swaps the two coordinates in $E\times_{S^1}E$.

It remains to compute the $C_2$-framing on $M$. We will use the following labeling scheme for the fixed points of $F$ in both factors of $E_\theta$ in $E\times_{S^1} E$.

\begin{figure}[H]
\center
\includegraphics[scale=.4]{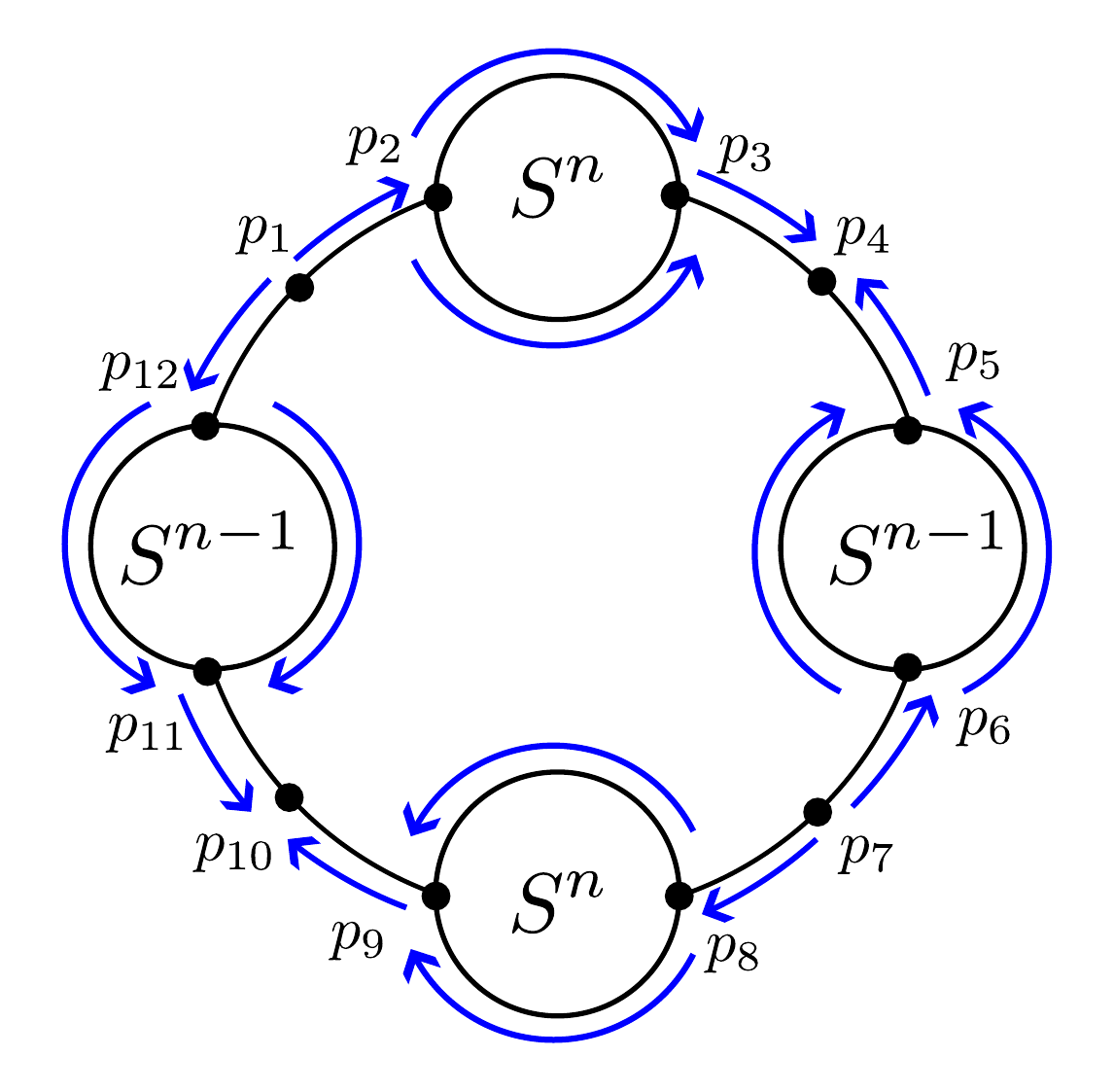}
\caption{Labeling scheme for fixed points}
\end{figure}

As a $C_2$-manifold, $M$ is the disjoint union of 6 copies of $C_2\times S^1$. In terms of the above labeling scheme, $M$ is
\begin{align*}
(p_1,p_7)\times S^1 & \longleftrightarrow (p_7,p_1)\times S^1 & (p_4,p_{10})\times S^1 & \longleftrightarrow (p_{10},p_4)\times S^1\\
(p_2,p_8)\times S^1 & \longleftrightarrow (p_8,p_2)\times S^1 & (p_5,p_{11})\times S^1 & \longleftrightarrow (p_{11},p_5) \times S^1\\
(p_3,p_9)\times S^1 & \longleftrightarrow (p_9,p_3)\times S^1 & (p_6,p_{12})\times S^1 & \longleftrightarrow (p_{12},p_6) \times S^1
\end{align*}
where the double sided arrow denotes the $C_2$-action. 

Note that $C_2$-equivariance ensures that the framing on $C_2\times S^1$ is determined entirely by the framing on one of the copies of $S^1$. For the rest of the proof we will analyze the local behavior of $F$ around its 2-periodic points. It will be more technically convenient to analyze the local behavior of $\Psi^{2}_{S^1} F$ around its fixed points. Since these are equivalent, it presents no problem to make this shift in perspective.  

In a neighborhood of $(p_1,p_7)\times S^1$, the map $\Psi_{S^1}^{2}F$ is constant across fibers so $(p_1,p_7)\times S^1$ is equipped with either the positive or negative standard framing. Here $\Psi_{S^1}^2F$ repels in 2 directions as in the proof of \cref{prop:trivial lefschetz trace} then interchanges the two directions. The sign of the framing is given by the degree of $(\id-\Psi_{S^1}^2 f)$. In local coordinates, this is given by the determinant of $\begin{bmatrix}
1&-\lambda\\-\lambda&1\end{bmatrix}$ for some $\lambda>1$. Since the determinant is negative, we conclude that, as a framed manifold, $(p_1,p_7)\times S^1$ is $S^1_{-\std}$. 

In a neighborhood of $(p_2,p_8)\times S^1$, the map $\Psi_{S^1}^{2} F$ twists two of the $n$ directions of the copy of $S^n$ associated to $p_2$ by $\theta$ in the fiber $E_\theta$. The map also repels in the $n$ directions of $S^n$ associated to $p_2$ and the $n$ directions of $S^n$ associated to $p_8$ then swaps the coordinates associated to $p_2$ with the coordinates associated to $p_8$. The only local behavior here that is non-constant across the fibers is the twisting action which is explicitly described in \cref{const:example}. Thus, $(p_2,p_8)\times S^1$ is one of $S^1_{\pm \eta}$ as a framed manifold. Note that it is actually a stabilization of $\pm\eta$ which we consider to be equivalent to $\pm\eta$. The sign will again be given by the sign of the determinant of $(\id-\Psi_{S^1}^2F)$ after writing this map in local coordinates. In local coordinates, $\Psi_{S^1}^2F$ may be expressed as the $2n\times 2n$ block matrix 
\[
M=\left[\begin{array}{c|c}
0_n&\lambda I_n\\ \hline
\lambda(a_{i,j}) & 0_n
\end{array}\right]
\]
where $\lambda>1$ and $(a_{i,j})$ is the matrix of \cref{const:example}. Then,
\[
\det(I-M)=(1-\lambda^2)^{n-2} ((1-\lambda^2\cos(\theta))^2+\lambda^4\sin^2(\theta)),
\] 
the sign of which is $(-1)^n$ since $\lambda>1$. Thus, $(p_2,p_8)\times S^1$ is $S^1_{(-1)^{n}\eta}$ as a framed manifold.

In a neighborhood of $(p_3,p_9)\times S^1$, we again deform $F$ slightly by collapsing a neighborhood of $(p_3,p_9)$ down to $(p_3,p_9)$ and then translating it slightly along the adjoining interval, just as we did in the proof of \cref{prop:fiberwise Lefschetz cobordism class}, so that there is no longer any fixed point to consider here. 

In a neighborhood of $(p_4,p_{10})\times S^1$. The map $F$ is constant across fibers so we get either the negative or positive standard framing. Since $\Psi^{2}_{S^1}F$ is attracting in two directions and then interchanges the two directions, $(p_4,p_{10})\times S^1$ has the positive standard framing. 

In a neighborhood of $(p_5,p_{11})\times S^1$, we employ the same trick as near $(p_3,p_9)\times S^1$ allowing us to disregard the cobordism class of this manifold. 

In a neighborhood of $(p_6,p_{12})\times S^1$, the map $F$ is again constant across the fibers. So as a framed manifold $(p_6,p_{12})\times S^1$ is $S^1_{\pm\std}$. Observe that $\Psi^{2}_{S^1}F$ repels in the $(n-1)$ directions associated to $p_6$, repels in the $(n-1)$ directions associated to $p_{12}$ and then swaps the two sets of coordinates. To compute the sign of the framing we need to compute the local degree of $\id-\Psi_{S^1}^2F$. Working in local coordinates, $\Psi_{S^1}^2F$ is represented by the $2(n-1)\times 2(n-1)$ block matrix
\[
M=\left[ 
\begin{array}{c|c}
0_{n-1} & \lambda I_{n-1}\\ \hline
\lambda I_{n-1} & 0
\end{array}
\right]
\]
with $\lambda>1$. Then $\det(I-M)=(1-\lambda^2)^{n-1}$, the sign of which is $(-1)^{n-1}$. Therefore, $(p_2,p_3)\times S^1$ is $S^1_{(-1)^{n-1}\std}$ as a framed manifold. 

The claim then follows by recalling that the $C_2$-action on each of these manifolds is $C_2\times S^1$ and that this determines the $C_2$-framing entirely after what we have argued thus far.
\end{proof}

We now wish to show that the above cobordism class is non-trivial. We must first recall the equivariant transfer. 

\begin{df}\label{df:equivariant transfer}
Let $G$ be a finite group and $\rho$ its regular representation. The \textbf{equivariant transfer} is a map 
\[
\pi_n(\Sigma_+^\infty BG)\xrightarrow{\tr} \pi_n^G(\Sigma_+^\infty EG)
\]
defined as follows. Note that, since spheres are compact, any element of $\pi_n(\Sigma_+^\infty BG)$ is determined by its behavior on some finite skeleton of $BG$. The same is true for $EG$. Let $BG^{(l)}$ and $EG^{(l)}$ be such finite skeleta. Choose an equivariant fiberwise embedding $EG^{(l)}\to BG^{(l)}\times k\rho$ with $k$ sufficiently large. Now apply a Pontryagin-Thom collapse map onto the image of $EG^{(l)}$ in $BG^{(l)}\times k\rho$ to obtain a map 
\[
\Sigma^{k\rho}BG^{(l)} \to \Sigma^{k\rho}EG^{(l)}.
\]
Smashing this map with $S^n$ gives a map 
\[
\Sigma^{k\rho+n}BG^{(l)}\to \Sigma^{k\rho+n}EG^{(l)}
\]
with which we may postcompose to obtain
\[
\pi_n(\Sigma_+^\infty BG)\xrightarrow{\tr} \pi_n^G(\Sigma_+^\infty EG).
\]
\end{df}

\begin{lem}\label{lem:orbits transfer}
For $G$ a finite group, the following diagram commutes:
\begin{figure}[H]
\center
\begin{tikzcd}
\omega_n^G(EG) \arrow[r] \arrow[d,"(-)/G"]& \pi_n^G \left(\Sigma_+^\infty EG\right)\\
\omega_n(BG) \arrow[r] & \pi_n\left(\Sigma_+^\infty BG\right)\arrow[u,"\tr"]
\end{tikzcd}
\end{figure}
where both horizontal arrows are Pontryagin-Thom isomorphisms, the left arrow passes to $G$ orbits, and the right arrow is the transfer map of \cref{df:equivariant transfer}.
\end{lem}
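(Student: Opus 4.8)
The plan is to show that the Pontryagin--Thom isomorphisms intertwine the ``pass to orbits'' operation on framed cobordism with the equivariant transfer on stable homotopy, by unwinding both sides down to the geometric level, where the statement becomes essentially tautological. First I would recall that the bottom horizontal map $\omega_n(BG)\to\pi_n(\Sigma^\infty_+ BG)$ sends a framed singular manifold $(M,\phi\colon M\to BG)$ to the stable map obtained by choosing an embedding $M\hookrightarrow \R^{n+k}$, collapsing onto a tubular neighborhood, using the framing to identify the Thom space of the normal bundle with $\Sigma^{k}(M_+)$, and composing with $\Sigma^k(\phi_+)$. Likewise the top map sends a framed $G$-manifold $(N,\psi\colon N\to EG)$ to the analogous equivariant collapse, where now one embeds $N$ equivariantly into $k\rho\oplus\R^n$ (for $k$ large) and uses the $G$-framing. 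The key observation is that if $N\to EG$ is a free $G$-manifold over $EG$ and $M=N/G$ with induced map $\bar\psi\colon M\to BG$, then an equivariant embedding $N\hookrightarrow k\rho\oplus\R^n$ covering a (stabilized) embedding $M\hookrightarrow \R^{n+k}$ exhibits $N$ as the pullback of $EG^{(l)}\to BG^{(l)}\times k\rho$ along $\bar\psi$, at least after restricting to a skeleton large enough to carry $M$; the freeness of the action is exactly what makes such compatible embeddings exist.

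With that setup, I would run the following steps in order. Step one: replace $BG$ and $EG$ by finite skeleta $BG^{(l)}, EG^{(l)}$ large enough to carry the given class and its reference map, as in \cref{df:equivariant transfer} --- this is harmless since spheres are compact. Step two: fix an equivariant fiberwise embedding $EG^{(l)}\hookrightarrow BG^{(l)}\times k\rho$ and let $c\colon \Sigma^{k\rho}BG^{(l)}_+\to\Sigma^{k\rho}EG^{(l)}_+$ be the associated Pontryagin--Thom collapse, so that $\tr$ is postcomposition with (the smash of $c$ with a sphere). Step three: given $(N,\psi)\in\omega_n^G(EG^{(l)})$ free, embed $N$ equivariantly in $EG^{(l)}\times \ell\rho \subset BG^{(l)}\times(k+\ell)\rho$ --- i.e. embed $N$ \emph{inside} the tube around $EG^{(l)}$ --- and observe that the collapse map realizing $[N,\psi]\mapsto (\text{top PT})$ factors through $c$ followed by the collapse of $N$ inside $EG^{(l)}\times\ell\rho$. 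Step four: pass to orbits. The orbit space $M=N/G$ embeds in $BG^{(l)}\times(k+\ell)\rho / G$; since the $G$-action on the fibers of the tube of $EG^{(l)}$ is free and $EG^{(l)}\to BG^{(l)}$ models the universal bundle, the quotient of the collapse map for $N\subset EG^{(l)}\times\ell\rho$ is exactly the collapse map for $M\subset BG^{(l)}\times(\text{vector bundle})$, whose underlying stable map is the image of $[M,\bar\psi]$ under the bottom PT map. Composing the two sides shows that transfer-then-top-PT and bottom-PT-then-(top PT image of) orbits agree, which is the commutativity of the square.

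The main obstacle --- and the step I would spend the most care on --- is Step three/four: checking that taking $G$-orbits of the equivariant Pontryagin--Thom collapse of $N$ (sitting inside the $\rho$-tube around $EG^{(l)}$) genuinely produces the non-equivariant collapse of $M=N/G$, and that the framings match up under this quotient. The framing bookkeeping is where the regular representation $\rho$ enters essentially: an equivariant $\R^n$-framing of $N$ over $EG$ descends, after quotienting, to an $\R^n$-framing of $M$ over $BG$ precisely because $T(EG^{(l)}\times k\rho)/G$ restricted to fibers contributes the (now untwisted) normal data, and one must confirm this descent is compatible with the stabilization conventions in the definition of a $V$-framing over $B$. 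I would handle this by working with a fixed such embedding and tracking the Thom-space identifications explicitly on a skeleton, citing \cite[Theorem 3.60]{williams_framed_PT} for the precise form of the PT map in both the equivariant and non-equivariant cases so that the two collapse maps are being compared term-by-term rather than merely up to homotopy. Everything else (compactness reductions, naturality of collapse maps in the reference map $\psi$) is routine.
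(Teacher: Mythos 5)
Your proposal takes essentially the same approach as the paper: both unwind the equivariant and non-equivariant Pontryagin--Thom collapses, use the pullback identification of the free $G$-manifold with $(N/G)\times_{BG}EG$ restricted to a finite skeleton to factor the equivariant collapse through the transfer $\Sigma^{k\rho}BG^{(l)}_+\to\Sigma^{k\rho}EG^{(l)}_+$, and finish with the observation that an $\R^n$-framing (with the $\rho$-part canonically trivialized / descending along the quotient) makes the Thom-space identifications compatible. The paper phrases the key factorization via an auxiliary transfer $\Sigma^d M/G\to\Sigma^d M$ and its compatibility with the universal transfer, whereas you phrase it as embedding $N$ inside the $\rho$-tube around $EG^{(l)}$ and factoring the collapse; these are two equivalent descriptions of the same geometric step.
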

\begin{proof}
Let $M\in \omega_n^G(EG)$ be a free $G$-manifold equipped with an $\R^n$-framing. The image of $M$ in $\pi_n^G(\Sigma_+^\infty EG)$ is given by first embedding $M$ into  $S^{k\rho+n+d}$ and then taking the composite
\begin{align*}
S^{kW+n+d}
& \to D(k\rho\oplus\R^{n+d})/S(k\rho\oplus\R^{n+d})\\
& \to D(\nu)/S(\nu)\\
& \to D(M\times (k\rho\oplus\R^d))/S(M\times(k\rho\oplus\R^d))\\
&\to D(EG\times(k\rho\oplus\R^d))/S(EG\times(k\rho\oplus\R^d)).
\end{align*}
In the above, $\nu$ is the normal bundle of $M\to S^{k\rho+n+d}$, the second map is a collapse map, and the third map is a trivialization of the normal bundle induced by the framing. This composite is a map:
\begin{equation}\label{eq:horizontal}
S^{k\rho+n+d}\longrightarrow D(EG\times(k\rho+d))/S(EG\times (k\rho+d))\cong \Sigma_+^{k\rho+d}EG
\end{equation}

The details of this construction can be found in \cite[Definition 2.48]{williams_framed_PT}.

On the other hand, mapping $M$ to $M/G$, then applying the non-equivariant Pontryagin-Thom isomorphism gives
\begin{align*}
S^{n+d}
& \to D(n+d)/S(n+d)\\
& \to D(\nu)/S(\nu)\\
& \to D(M/G\times \R^d)/S(M/G\times \R^d)\\
& \to D(BG\times \R^d)/S(BG\times R^d)\cong \Sigma_+^d BG 
\end{align*}
We then take this element of $\pi_n(\Sigma_+^\infty BG)$ suspend it by $k\rho$, factor through a finite subskeleton of $BG$, and compose with the map of \cref{df:equivariant transfer} to obtain

\begin{equation}\label{eq:orbits-transfer}
(S^{k\rho+n+d}\to \Sigma^{k\rho+d} BG^{(l)} \to \Sigma^{k\rho+d}EG^{(l)})\in\pi_n^G(\Sigma_+^\infty EG)
\end{equation}

as desired. We now wish to show that \cref{eq:horizontal} and \cref{eq:orbits-transfer} define the same element of $\pi_n^G(\Sigma_+^\infty EG)$. 

We may define a transfer map $\Sigma^dM/G\to \Sigma^d M$ analogously to \cref{df:equivariant transfer}. Since $M$ is the pullback of $EG$ along the map $M/G\to BG$, the map 
\[
\Sigma^{k\rho+d} M/G\to \Sigma^{k\rho+d} M\to \Sigma^{k\rho +d} EG^{(l)}
\]
is equivariantly homotopic to  
\[
\Sigma^{k\rho+d} M/G\to \Sigma^{k\rho+d} BG^{(l)}\to \Sigma^{k\rho +d} EG^{(l)}.
\]

Thus, \cref{eq:orbits-transfer} is homotopic to 
\begin{align*}
\Sigma^{k\rho}S^{n+d}
&\to \Sigma^{k\rho}D(M/G\times \R^d)/S(M/G\times\R^{d})\\
&\to \Sigma^{k\rho}D(M\times\R^d)/S(M\times \R^d)\\
&\to \Sigma^{k\rho}D(EG\times\R^d)/S(EG\times\R^d).
\end{align*}
Since the second and third maps in this composition are both Pontryagin-Thom collapse maps, their composition is a collapse map as well. We can then see that \cref{eq:horizontal} and \cref{eq:orbits-transfer} define the same element at the level of the underlying manifolds. It remains to show that they define the same element at the level of framed manifolds.  

Since $M$ is $\R^n$-framed, the trivialization of $\nu(M,k\rho\oplus\R^{n+d})$ in the $k\rho$ coordinates is canonical. Therefore, $D(M\times (k\rho\oplus\R^d))/S(M\times (k\rho\oplus\R^d))$ is equivariantly homeomorphic to $\Sigma^{k\rho}D(M\times \R^d)/S(M\times\R^d)$. Putting this together with the above rearrangement of \cref{eq:orbits-transfer}, shows that \cref{eq:horizontal} and \cref{eq:orbits-transfer} define the same element of $\pi_n^G(\Sigma_+^\infty EG)$ as desired. 
\end{proof}

\begin{prop}\label{prop:nontrivial fuller trace}
The fiberwise Fuller trace
\[
R_{S^1,C_2}(\Psi_{S^1}^{2}f):S^1\times \Sph \to \Sigma_{+S^1}^\infty \Lambda_{S^1}^{\Psi_{S^1}^{2}f}E\times_{S^1}E
\]
of the map of \cref{const:example} is non-trivial.
\end{prop}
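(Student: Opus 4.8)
The plan is to deduce non‑triviality from the framed $C_2$‑cobordism computation of \cref{prop:cobordism class of fiberwise Fuller trace}. If $R_{S^1,C_2}(\Psi_{S^1}^{2}f)$ were null, then so would be its composite with the collapse $\Sigma_{+S^1}^\infty\Lambda_{S^1}^{\Psi_{S^1}^{2}f}(E\times_{S^1}E)\to\Sigma_{+S^1}^\infty S^1$; by \cref{prop:cobordism class of fiberwise Fuller trace} and \cref{thm:equivariant parameterized PT} this composite corresponds to
\[
[M]\ =\ C_2\times S^1_{-\std}\amalg C_2\times S^1_{\std}\amalg C_2\times S^1_{(-1)^{n}\eta}\amalg C_2\times S^1_{(-1)^{n-1}\std}\ \in\ \omega_0^{C_2}(S^1\xrightarrow{\id}S^1),
\]
so it suffices to show $[M]\neq 0$. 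Here the base $S^1$, the collapsed target $S^1$, and hence the spectrum of sections all carry the trivial $C_2$‑action, so \cref{lem:split section} holds verbatim $C_2$‑equivariantly: $\Gamma_{S^1}\Sigma_{+S^1}^\infty S^1\simeq\Sph\vee\Sph^{-1}$ with trivial action, whence by \cref{thm:equivariant parameterized PT}
\[
\omega_0^{C_2}(S^1\xrightarrow{\id}S^1)\ \cong\ \pi_0^{C_2}(\Sph)\oplus\pi_0^{C_2}(\Sph^{-1})\ =\ A(C_2)\oplus\pi_1^{C_2}(\Sph).
\]
I would then show that the component of $[M]$ in $\pi_1^{C_2}(\Sph)$ is non‑zero.

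Next I would inspect the four summands. Each standard‑type framing $S^1_{\pm\std}$, $S^1_{(-1)^{n-1}\std}$ is bounding, so its underlying non‑equivariant class lies in the $\Sph$‑summand of $\omega_0(S^1\xrightarrow{\id}S^1)\cong\pi_0(\Sph)\oplus\pi_1(\Sph)$; since the splitting of \cref{lem:split section} is induced by the retract $\ast_+\to S^1_+\to\ast_+$ and is therefore respected by the $C_2\times(-)$ transfer, the three $\std$‑summands of $[M]$ contribute nothing to its $\pi_1^{C_2}(\Sph)$‑part (the first two being moreover mutually null‑cobordant over $S^1$ via $C_2\times(S^1\times I)$, as in \cref{prop:trivial lefschetz trace}). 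The remaining summand $C_2\times S^1_{(-1)^{n}\eta}$ is a free $C_2$‑manifold, so by the fiberwise‑over‑$S^1$ analogue of \cref{lem:orbits transfer} its Pontryagin–Thom class is the transfer of the Pontryagin–Thom class of $S^1_{(-1)^{n}\eta}$ over $S^1\xrightarrow{\id}S^1$, whose $\Sph^{-1}$‑component is the Hopf element $\eta\in\pi_1(\Sph)$ (recorded after \cref{df:Hopf framings}; note $\eta$ has order $2$, so $(-1)^n\eta=\eta$). Hence the $\pi_1^{C_2}(\Sph)$‑component of $[M]$ is $\mathrm{tr}(\eta)$, the image of $\eta$ under the transfer $\pi_1(\Sph)\to\pi_1^{C_2}(\Sph)$ associated to $C_2$. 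This is exactly where the equivariant story departs from the non‑equivariant one: in the non‑equivariant Lefschetz computation this term occurs with multiplicity two and is killed by $2\eta=0$ (cf.\ \cref{prop:trivial lefschetz trace}), whereas the two iterates here merge into a single free $C_2$‑orbit carrying the undivided class $\mathrm{tr}(\eta)$.

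It remains to show $\mathrm{tr}(\eta)\neq 0$ in $\pi_1^{C_2}(\Sph)$, and I expect this to be the crux. The transfer factors as
\[
\pi_1(\Sph)\ \hookrightarrow\ \pi_1(\Sigma_+^\infty BC_2)\ \xrightarrow{\ \cong\ }\ \pi_1^{C_2}(\Sigma_+^\infty EC_2)\ \longrightarrow\ \pi_1^{C_2}(\Sph),
\]
where the first arrow is the basepoint inclusion, the second is the Adams isomorphism (equivalently, by \cref{lem:orbits transfer} the transfer $\pi_1(\Sigma_+^\infty BC_2)\to\pi_1^{C_2}(\Sigma_+^\infty EC_2)$ is an isomorphism, being conjugate through the two Pontryagin–Thom maps to the orbit map $\omega_1^{C_2}(EC_2)\to\omega_1(BC_2)$), and the third is induced by $EC_2\to\ast$. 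The third map is injective: by the tom Dieck splitting (Segal conjecture for $C_2$) one has $\pi_1^{C_2}(\Sph)\cong\pi_1(\Sigma_+^\infty BC_2)\oplus\pi_1(\Sph)$ with the first summand exactly the image of $\pi_1^{C_2}(\Sigma_+^\infty EC_2)$, the geometric fixed point map $\pi_1^{C_2}(\Sph)\to\pi_1(\Sph)$ furnishing the splitting. Since $\eta\neq 0$ in $\pi_1(\Sph)$, we conclude $\mathrm{tr}(\eta)\neq 0$, hence $[M]\neq 0$, hence $R_{S^1,C_2}(\Psi_{S^1}^{2}f)$ is non‑trivial. The genuine obstacle is precisely this injectivity: $\mathrm{tr}(\eta)$ is invisible both to its underlying non‑equivariant class (which is $2\eta=0$, consistent with \cref{prop:trivial lefschetz trace}) and to its geometric fixed points (which vanish because $\Psi_{S^1}^{2}f$ restricts on the $C_2$‑fixed diagonal to the fixed‑point‑free map $f$), so only the full $C_2$‑equivariant stable homotopy group detects it — exactly the sharper sensitivity of the fiberwise Fuller trace promised in the introduction.
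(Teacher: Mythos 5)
Your argument is correct and tracks the paper's proof step for step: reduce via the equivariant version of \cref{lem:split section} to the $\pi_1^{C_2}(\Sph)\cong\pi_0^{C_2}(\Sph^{-1})$-component, observe the standard-framed pieces vanish there, lift $C_2\times S^1_{(-1)^n\eta}$ to the free summand of the tom Dieck splitting, and detect it via \cref{lem:orbits transfer} together with the split injectivity of $\pi_1^{C_2}(\Sigma_+^\infty EC_2)\to\pi_1^{C_2}(\Sph)$. The only organizational divergence is that you invoke a ``fiberwise-over-$S^1$ analogue'' of \cref{lem:orbits transfer}, which the paper does not state; the paper avoids needing this by first projecting $C_2\times S^1_{\eta}$ along $\omega_0^{C_2}(S^1\to S^1)\to\omega_1^{C_2}(*)$ (i.e.\ forgetting the reference map, which the splitting lemma identifies with the $\Sph^{-1}$-component) and then applying \cref{lem:orbits transfer} as stated over a point.
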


\begin{proof}
In \cref{prop:cobordism class of fiberwise Fuller trace}, we showed that the cobordism class associated to $R_{S^1,C_2}(\Psi_{S^1}^{2}f)$ corresponds to 
\[
M=C_2\times S^1_{-\std}\amalg C_2\times S^1_{\std}\amalg C_2\times S^1_{(-1)^{n}\eta}\amalg C_2\times S^1_{(-1)^{n-1}\std}
\]
in $\omega_{0}^{C_2}(S^1\to S^1)$. Note that to do this we projected $E_\theta$ to $*$. It suffices to show that $M$ is not null-cobordant because, if $M$ fails to be null-cobordant in $\omega_0^{C_2}(S^1\to S^1)$, then the associated element in $\omega_0^{C_2}(\Lambda_{S^1}^{\Psi_{S^1}^{2}f}E\times_{S^1}E\to S^1)$ will be non-trivial. By \cref{thm:equivariant parameterized PT} this would show that $R_{S^1,C_2}(\Psi_{S^1}^{2}f)$ is non-trivial. 

Note that \cref{lem:split section} holds equivariantly by the same argument as above. We may thus, consider the image of $M$ under the equivariant parameterized Pontryagin-Thom isomorphism as an element of $\pi_0^{C_2}(\Sph)\oplus\pi_0^{C_2}(\Sph^{-1})$. Again by the equivariant analog of \cref{lem:split section}, the 3 copies of $C_2\times S^1_{\pm\std}$ correspond to constant sections on $S^1$ and thus, to elements of $\pi_0^{C_2}(\Sph)\oplus \pi_0^{C_2}(\Sph^{-1})$ which are zero in the second coordinate. Therefore, it suffices to show that $C_2\times S^1_\eta$ is non-zero as an element of $\omega_1^{C_2}(*)\cong\pi_0^{C_2}(\Sph^{-1})$.

The tom Dieck splitting identifies $\pi_0^{C_2}(\Sph^{-1})$ with $\pi_1(\Sph)\oplus\pi_1^{C_2}(\Sigma_+^\infty EC_2)$. Since $C_2\times S^1_{\pm\eta}$ is a free $C_2$-manifold, the equivariant Pontryagin-Thom isomorphism maps it entirely into the $\pi_1^{C_2}(\Sigma_+^\infty EC_2)$ summand. Next we will apply \cref{lem:orbits transfer}.

At this point it is convenient to choose point-set models for $EC_2$ and $BC_2$. We will choose $S^\infty$ and $\RP^\infty$ respectively. Taking $C_2\times S^1_{\pm\eta}\in\omega_1^{C_2}(S^\infty)$, we observe that the map $C_2\times S^1_{\pm\eta}\to S^\infty$ is $C_2$-equivariantly null-homotopic. So when we take orbits, the image of the reference map $S^1_{\pm\eta}\to \RP^\infty$ is a single point. After applying the Pontryagin-Thom isomorphism, the resulting formal desuspension of the map $S^{k+1}\to S^k\wedge_+ \RP^\infty$ factors through the formal desuspension of $S^{k+1}\to S^k$. In fact, if we split $\pi_1(\Sigma_+^\infty \RP^\infty)\cong \pi_1(\Sph)\oplus\pi_1(\Sigma^\infty \RP^\infty)$, then the image of $S^1_{\pm\eta}\to *\to BC_2$ under the Pontryagin-Thom isomorphism is $(\pm\eta,0)$ where $\eta$ is a suspension of the Hopf-fibration. Finally, since 
\[
\pi_1(\Sigma_+^\infty BC_2)\xrightarrow{\tr} \pi_1^{C_2}(\Sigma_+^\infty EC_2)
\]
is an isomorphism, the image of $C_2\times S^1_{\pm\eta}$ in $\pi_1^{C_2}(\Sigma_+^\infty EC_2)$ is non-trivial. Then the claim follows.  
\end{proof}

We have now proved all of the pieces of our main theorem.

\begin{thm}\label{thm:theorem c}
There is a family of endomorphisms $f:E\to E$ over $S^1$ such that the fiberwise Reidemeister traces of $f$ and $f^2$ are zero but the fiberwise Fuller trace of $f$ is non-zero.
\end{thm}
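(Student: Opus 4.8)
The plan is to assemble the main theorem directly from the three propositions already proved, since Theorem~\ref{thm:theorem c} is really just a restatement of \cref{thm:main} with $B=S^1$ and $n=2$. First I would invoke \cref{const:example} with $n\geq 3$ (say $n=3$) to produce the family of endomorphisms $f\colon E\to E$ over $S^1$, where $E=E_\theta\times S^1$ is the trivial bundle whose fiber is the necklace of four alternating spheres. This is the candidate whose three invariants we have already analyzed.

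Next I would cite the vanishing results. The fact that $R_{S^1}(f)=0$ is exactly \cref{lem:trivial f}, which follows because $f$ has no fixed points so the Reidemeister trace is fiberwise non-surjective and hence fiberwise null-homotopic. The fact that $R_{S^1}(f^2)=0$ is exactly \cref{prop:trivial f^2}: there one replaces $f^2$ by the fiberwise-homotopic map $F=h\circ f^2$ with isolated fixed points, computes via \cref{thm:equivariant parameterized PT} that the associated framed manifold over $S^1$ is a disjoint union of copies of $S^1$ with standard, negative-standard, and Hopf framings, shows this collection is null-cobordant (pairing $S^1_{\std}$ with $S^1_{-\std}$ via a horseshoe cobordism and using the pants cobordism to trade $2\cdot S^1_{\eta}$ for $2\cdot S^1_{\std}$), and finally transports this conclusion from $\omega_0(L_{S^1}T^2\to S^1)$ back to $\omega_0(\Lambda_{S^1}^{f^2}E\to S^1)$ using that the collapse map $E\to T^2$ is $(n-1)$-connected and hence induces an isomorphism on these cobordism groups in degree $0<n-2$.

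Finally I would cite \cref{prop:nontrivial fuller trace} for the non-vanishing of the fiberwise Fuller trace $R_{S^1,C_2}(\Psi_{S^1}^2 f)$: by \cref{prop:cobordism class of fiberwise Fuller trace} its image under \cref{thm:equivariant parameterized PT} (after collapsing $E_\theta$ to a point) is $C_2\times S^1_{-\std}\amalg C_2\times S^1_{\std}\amalg C_2\times S^1_{(-1)^n\eta}\amalg C_2\times S^1_{(-1)^{n-1}\std}$ in $\omega_0^{C_2}(S^1\to S^1)$; the equivariant analog of \cref{lem:split section} kills the standard-framed summands as constant sections, and \cref{lem:orbits transfer} together with the tom Dieck splitting and the fact that the transfer $\pi_1(\Sigma_+^\infty BC_2)\xrightarrow{\tr}\pi_1^{C_2}(\Sigma_+^\infty EC_2)$ is an isomorphism shows that $C_2\times S^1_{\eta}$ survives, detected by the Hopf class. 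Hence the Fuller trace is non-trivial.

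There is essentially no new obstacle at this stage: the proof of Theorem~\ref{thm:theorem c} is a two-line bookkeeping argument that cites \cref{lem:trivial f}, \cref{prop:trivial f^2}, and \cref{prop:nontrivial fuller trace}. The only thing to be careful about is confirming that the single construction \cref{const:example} (with one fixed choice of $n\geq 3$) simultaneously serves as the witness for all three statements, which it does since all three propositions are stated for that construction. The genuinely hard work lies entirely in \cref{prop:trivial f^2} (the null-cobordism bookkeeping plus the connectivity argument identifying the two cobordism groups) and in \cref{prop:nontrivial fuller trace} (extracting the non-trivial Hopf class via the transfer and tom Dieck splitting), both of which are already established in the excerpt.
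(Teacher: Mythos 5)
Your proposal is correct and matches the paper's proof exactly: the paper also just cites \cref{lem:trivial f}, \cref{prop:trivial f^2}, and \cref{prop:nontrivial fuller trace}, all applied to the single family $f\colon E\to E$ from \cref{const:example}. Your extra paragraphs summarizing the content of those three results are accurate but not needed for the final assembly step.
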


\begin{proof}
The claim follows immediately from \cref{lem:trivial f}, \cref{prop:trivial f^2}, and \cref{prop:nontrivial fuller trace}.
\end{proof}

\subsection{A manifold example}

While \cref{thm:theorem c} answers \cite[Conjecture 1.9]{mp2} in the affirmative, our construction does not have manifold fibers. We construct a closed manifold example satisfying \cite[Conjecture 1.9]{mp2} in the following corollary to \cref{thm:theorem c}. 

\begin{cor}
There is some family of endomorphisms of closed manifolds $F:E\to E$ parameterized over a base space $B$ for which the fiberwise Reidemeister traces $R_B(F)$ and $R_B(F^2)$, both vanish but for which the fiberwise Fuller trace $R_{B,C_2}(\Psi^2(F))$ is non-zero. 
\end{cor}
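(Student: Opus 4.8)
The plan is to upgrade \cref{thm:theorem c} to a manifold example by \emph{thickening} the fibers of \cref{const:example} to closed manifolds that still contain $E_\theta$ as a retract, and then transporting the self-map along that retraction. First I would fix a large $N$, embed $E_\theta$ in $\R^N$, and let $W_\theta$ be a regular neighborhood of $E_\theta$: a compact smooth manifold with boundary admitting a strict retraction $r\colon W_\theta\to E_\theta$ with $r\circ i=\id$ for the inclusion $i\colon E_\theta\hookrightarrow W_\theta$. Then I would form the double $DW_\theta=W_\theta\cup_{\partial W_\theta}W_\theta$, a closed smooth manifold, together with its fold map $q\colon DW_\theta\to W_\theta$, which restricts to the identity on the first copy of $W_\theta$. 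Setting $\iota\colon E_\theta\hookrightarrow DW_\theta$ to be the inclusion into the first copy and $\pi:=r\circ q\colon DW_\theta\to E_\theta$, one has $\pi\circ\iota=\id_{E_\theta}$, so $E_\theta$ is a retract of the closed manifold $DW_\theta$. Since the bundle $E\to S^1$ of \cref{const:example} is trivial, I would simply take $E'=DW_\theta\times S^1$, a closed manifold bundle over $S^1$, with fiberwise maps $E\hookrightarrow E'$ and $E'\to E$ induced by $\iota$ and $\pi$, and define $F\colon E'\to E'$ fiberwise by $F_\theta=\iota\circ f_\theta\circ\pi$.

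The substance is then that the periodic point invariants are unchanged by this replacement. Because $F_\theta=\iota\circ f_\theta\circ\pi$ with $\pi\iota=\id$, a fixed point of $F_\theta$ forces $f_\theta$ to have a fixed point, so $F$ is fixed-point-free and $R_{S^1}(F)=0$ exactly as in \cref{lem:trivial f}. For the iterate, $\pi\iota=\id$ gives $F^2=\iota\circ f^2\circ\pi$, so writing $F^2=\iota\circ(f^2\pi)$ with $(f^2\pi)\circ\iota=f^2$, the cyclic invariance of the fiberwise Reidemeister trace (an instance of the cyclicity of bicategorical traces; see \cite{mp2, ponto_asterisque}) yields an equivalence $\Sigma_{+S^1}^\infty\Lambda_{S^1}^{F^2}E'\simeq\Sigma_{+S^1}^\infty\Lambda_{S^1}^{f^2}E$ carrying $R_{S^1}(F^2)$ to $R_{S^1}(f^2)$, which vanishes by \cref{prop:trivial f^2}. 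For the Fuller trace I would exhibit $C_2$-equivariant fiberwise maps $\alpha\colon E'\times_{S^1}E'\to E\times_{S^1}E$, $\alpha(x_1,x_2)=(\pi x_1,\pi x_2)$, and $\beta\colon E\times_{S^1}E\to E'\times_{S^1}E'$, $\beta(y_1,y_2)=(\iota f y_2,\iota f y_1)$, with $C_2$ swapping the two factors, and check directly that both are $C_2$-equivariant and that $\beta\circ\alpha=\Psi_{S^1}^2 F$ while $\alpha\circ\beta=\Psi_{S^1}^2 f$. The $C_2$-equivariant cyclicity of the trace then identifies $R_{S^1,C_2}(\Psi_{S^1}^2 F)$ with $R_{S^1,C_2}(\Psi_{S^1}^2 f)$ under an equivalence of parameterized genuine $C_2$-spectra, and the latter is non-trivial by \cref{prop:nontrivial fuller trace}. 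Thus $F\colon E'\to E'$, with closed manifold fibers $DW_\theta$ over $B=S^1$, has the required properties.

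The main obstacle is pinning down, and citing at the right level of generality, the cyclic (Morita) invariance of the fiberwise Reidemeister trace and of its genuinely $C_2$-equivariant version: one needs that when an endomorphism factors as $\beta\circ\alpha$, the traces of $\beta\circ\alpha$ and $\alpha\circ\beta$ correspond under the canonical equivalence of the associated twisted free loop space suspension spectra, in the parameterized equivariant framework of \cite{mp2} --- note that this is genuinely needed, since $DW_\theta$ is not homotopy equivalent to $E_\theta$, so the relevant twisted loop spectra are only identified through this formal mechanism, not by a space-level equivalence. Everything else is routine: regular neighborhoods and their doubles are standard smooth constructions, the base bundle here is trivial so no fiberwise gluing subtleties arise, and the local index bookkeeping ($\Fix(F)=\iota(\Fix(f))$ and $\Fix(\Psi^2_{S^1}F)=\iota\times\iota(\Fix(\Psi^2_{S^1}f))$, with matching indices, since $F=\iota f\pi$ collapses the normal directions and $\pi\iota=\id$) is immediate.
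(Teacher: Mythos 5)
Your proposal is correct and takes essentially the same approach as the paper: thicken $E_\theta$ to a compact manifold with boundary containing $E_\theta$ as a retract, double it to get a closed manifold, and define $F$ as the composite (fold) $\circ$ (retract) $\circ f\circ$ (include). The only difference is cosmetic --- the paper builds an explicit handlebody by replacing $S^n$, $S^{n-1}$, $I$ with $S^n\times D^1$, $S^{n-1}\times D^2$, $I\times D^n$, whereas you use a regular neighborhood in $\R^N$ --- and your write-up is somewhat more careful in that it spells out the cyclic/Morita invariance of the bicategorical trace (both fiberwise and $C_2$-equivariantly) used to identify $R_B(F^2)$ with $R_B(f^2)$ and $R_{B,C_2}(\Psi^2_B F)$ with $R_{B,C_2}(\Psi^2_B f)$, a step the paper simply asserts.
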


\begin{proof}
We will base the construction off of \cref{const:example}. Replace $S^n$, $S^{n-1}$, and $I$ in the space $E_\theta$ in \cref{const:example} with $S^n\times D^1$, $S^{n-1}\times D^2$, and $I\times D^n$ respectively. Smoothly glue these together in the same arrangement as the pieces are wedged together in \cref{const:example} to obtain a handlebody $M_\theta$, the core of which is $E_\theta$. Let $M=M_\theta\times S^1$, and let $DM$ be the double of $M$ (which is a closed manifold). Now define a fiberwise endormorphism of $DM$ by collapsing $DM$ onto one copy of $M$, deformation retracting $M$ onto $E$, applying the map of \cref{const:example}, embedding $E$ back into $M$, and finally embedding $M$ back into $DM$. Call this new map $F:DM\to DM$. Then the fiberwise Reidemeister and Fuller traces of $F$ are exactly the same as those for the map $f:E\to E$ above. 
\end{proof}
\bibliographystyle{amsalpha}
\bibliography{streamlined}

\end{document}